\newtheorem{theorem}{Theorem}
\newtheorem{lemma}[theorem]{Lemma}
\newenvironment{proof}[1][Proof]{\noindent \textbf{#1.} }{\  \rule{0.5em}{0.5em}}
\begin{document}

\author{Zafer \c{S}iar$^{a}$ and Refik Keskin$^{b}$ \and $\ ^{a}$Bing\"{o}l
University, Department of Mathematics, Bing\"{o}l/TURKEY \and $^{b}$Sakarya
University, Department of Mathematics, Sakarya/TURKEY \and $^{a}$%
zsiar@bingol.edu.tr, $^{b}$rkeskin@sakarya.edu.tr}
\title{On Perfect Powers in $k$-Generalized Pell-Lucas Sequence}
\maketitle

\begin{abstract}
Let $k\geq 2$ and let $(Q_{n}^{(k)})_{n\geq 2-k}$ be the $k$-generalized
Pell sequence defined by 
\begin{equation*}
Q_{n}^{(k)}=2Q_{n-1}^{(k)}+Q_{n-2}^{(k)}+\cdots +Q_{n-k}^{(k)}
\end{equation*}%
for $n\geq 2$ with initial conditions 
\begin{equation*}
Q_{-(k-2)}^{(k)}=Q_{-(k-3)}^{(k)}=\cdots
=Q_{-1}^{(k)}=0,~Q_{0}^{(k)}=2,Q_{1}^{(k)}=2.
\end{equation*}

In this paper, we solve the Diophantine equation 
\begin{equation*}
Q_{n}^{(k)}=y^{m}
\end{equation*}%
in positive integers $n,m,y,k$ with $m,y,k\geq 2$. We show that all
solutions $(n,m,y)$ of this equation in positive integers $n,m,y,k$ such
that $2\leq y\leq 100$ are given by $(n,m,y)=(3,2,4),(3,4,2)$ for $k\geq 3$.
Namely, $Q_{3}^{(k)}=16=2^{4}=4^{2}$ for $k\geq 3.$
\end{abstract}

\bigskip Keywords: Fibonacci and Lucas numbers, Exponential Diophantine
equations, Linear forms in logarithms; Baker's method

AMS Subject Classification(2010): 11B39, 11D61, 11J86,

\section{\protect\bigskip Introduction}

Let $k,r$ be an integer with $k\geq 2$ and $r\neq 0$. Let the linear
recurrence sequence $\left( G_{n}^{(k)}\right) _{n\geq 2-k}$ of order $k$ be
defined by 
\begin{equation}
G_{n}^{(k)}=rG_{n-1}^{(k)}+G_{n-2}^{(k)}+\ldots +G_{n-k}^{(k)}\text{ }
\label{0}
\end{equation}%
for $n\geq 2$ with the initial conditions $%
G_{-(k-2)}^{(k)}=G_{-(k-3)}^{(k)}=\cdots =G_{-1}^{(k)}=0,$ $G_{0}^{(k)}=a,$
and $G_{1}^{(k)}=b.$ For $(a,b,r)=(0,1,1),~$the sequence $\left(
G_{n}^{(k)}\right) _{n\geq 2-k}$ is called $k$-generalized Fibonacci
sequence $\left( F_{n}^{(k)}\right) _{n\geq 2-k}$ (see \cite{luca2}). For $%
(a,b,r)=(0,1,2)$ and $(a,b,r)=(2,2,2),~$the sequence $\left(
G_{n}^{(k)}\right) _{n\geq 2-k}$ is called $k$-generalized Pell sequence $%
\left( P_{n}^{(k)}\right) _{n\geq 2-k}$ and $k$-generalized Pell-Lucas
sequence $\left( Q_{n}^{(k)}\right) _{n\geq 2-k},$ respectively (see \cite%
{klc1}). The terms of these sequences are called $k$-generalized Fibonacci
numbers, $k$-generalized Pell numbers and $k$-generalized Pell-Lucas
numbers, respectively. When $k=2,$ we have Fibonacci, Pell and Pell-Lucas
sequences, $\left( F_{n}\right) _{n\geq 0},$ $\left( P_{n}\right) _{n\geq
0}, $ and $\left( Q_{n}\right) _{n\geq 0}$, respectively.

There has been much interest in when the terms of linear recurrence
sequences are perfect powers. For instance, in \cite{ljung}, Ljunggren
showed that for $n\geq 2$, $P_{n}$ is a perfect square precisely for $%
P_{7}=13^{2}$ and $P_{n}=2x^{2}$ precisely for $P_{2}=2$. In \cite{cohn},
Cohn solved the same equations for Fibonacci numbers. Later, these problems
are extended by Peth\`{o} for Pell numbers and by Bugeaud, Mignotte and
Siksek for Fibonacci numbers. Peth\`{o} \cite{petho} and Cohn \cite{cohn1}
independenty found all perfect powers in the Pell sequence. They proved that
the only positive integer solution $(n,y,m)$ with $m\geq 2$ and $y\geq 2$ of
the Diophantine equation $P_{n}=y^{m}$ is given by $(n,y,m)=(7,13,2)$.
Bugeaud, Mignotte and Siksek \cite{Bgud} solved the Diophantine equation $%
F_{n}=y^{p}$ for $p\geq 2$ using modular approach and classical linear forms
in logarithms. Bravo and Luca showed in \cite{luca2} that the Diophantine
equation $F_{n}^{(k)}=2^{m}$ in positive integers $n,k,m$ with $k\geq 2$ has
the solutions $(n,k,m)=(6,2,3),(1,k,0)$ and $(n,k,m)=(t,k,t-2)$ for all $%
2\leq t\leq k+1.$ Except these, recently, for the studies related to $k$%
-generalized Fibonacci sequence, one can consult \cite{luca4,luca5,togbe1}.

In \cite{bravo2}, the authors found all perfect powers in the Pell-Lucas
sequence and proved the following result:

\begin{theorem}
\label{ty}Let $n,y,m\in 
\mathbb{N}
$ with $m\geq 2$. Then the equation $Q_{n}=y^{m}$ has no integer solutions
and the equation $Q_{n}=2y^{m}$ has only the solution $Q_{1}=2\cdot 1^{m}$.
\end{theorem}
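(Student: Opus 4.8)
The plan is to handle the two equations by completely different methods, the first being elementary and the second requiring the associated Pell equation together with lower bounds for linear forms in logarithms. For $Q_{n}=y^{m}$ I would first note, by an immediate induction from $Q_{0}=Q_{1}=2$ and $Q_{n}=2Q_{n-1}+Q_{n-2}$, that $Q_{n}\equiv 2\pmod{4}$ for every $n\geq 0$. Hence $2\parallel Q_{n}$, so the exponent of $2$ in $Q_{n}$ equals $1$, which is a multiple of no $m\geq 2$; therefore $Q_{n}$ is never a perfect power, and the first assertion follows with no further work.

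For $Q_{n}=2y^{m}$ I would set $H_{n}:=Q_{n}/2$, so the equation reads $H_{n}=y^{m}$, and record the facts $H_{0}=H_{1}=1$, $H_{n}=2H_{n-1}+H_{n-2}$, $\delta^{n}=H_{n}+P_{n}\sqrt{2}$ with $\delta=1+\sqrt{2}$ and $\bar{\delta}=1-\sqrt{2}=-\delta^{-1}$, and the Pell identity $H_{n}^{2}-2P_{n}^{2}=(-1)^{n}$. Since $H_{n}$ is always odd, $y$ must be odd, so $y=2$ is impossible; $y=1$ gives $H_{n}=1$, i.e.\ $n\leq 1$, which is the solution in the statement. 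Thus I must rule out $y\geq 3$, and I may assume $m=p$ is prime. If $n$ is even, the Pell identity gives $(y^{p}-1)(y^{p}+1)=2P_{n}^{2}$; the two factors are even with $\gcd$ equal to $2$, so after cancelling the common factor coprimality forces one of $y^{p}-1$, $y^{p}+1$ to be a perfect square. The sub-case $y^{p}-1=c^{2}$ is excluded by Lebesgue's classical theorem (that $c^{2}+1=y^{p}$ is unsolvable for $y>1$, $p\geq 2$), and $y^{p}+1=c^{2}$ by Mihailescu's theorem (or simply because $b^{p}-a^{p}=2$ has no solution with $p\geq 2$); so the even case contributes nothing. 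If $n$ is odd and $p=2$, the Pell identity becomes $y^{4}+1=2P_{n}^{2}$, a classical quartic Diophantine equation whose only positive solution is $y=1$, whence $H_{n}=1$ and $n=1$.

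It remains to treat $n$ odd and $p$ an odd prime, and here I would use Baker's method. From $\delta^{n}-2y^{p}=-\bar{\delta}^{n}$ and $|\bar{\delta}^{n}|=\delta^{-n}$ one gets $\bigl|\delta^{n}/(2y^{p})-1\bigr|<\delta^{-n}$, hence $\Lambda:=\bigl|n\log\delta-p\log y-\log 2\bigr|<2\delta^{-n}$. Applying Matveev's theorem to this three-term form over $\mathbb{Q}(\sqrt{2})$ gives $\log\Lambda>-C(1+\log n)\log y$ with $C$ effective; comparing with $\Lambda<2\delta^{-n}$ and inserting the trivial estimate $p\log y<\log Q_{n}<n\log\delta+\log 2$ then bounds $p$ by a quantity of size $1+\log n$. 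For each resulting exponent I would then bound $n$: for the smallest ones (say $p=3$) by a modular or elliptic-curve argument, and in general by observing that $H_{n}=y^{p}$ forces $H_{n}$ to be a perfect $p$-th power, hence powerful, while the primitive-divisor theorem for the Lucas sequence $(Q_{n})$ supplies, once $n$ exceeds an effective bound, a primitive prime $q\geq 2n-1$ dividing $H_{n}$ to the first power, a contradiction. Either way $n$ is confined to an explicit finite range, which is checked directly, producing no solution with $y\geq 3$.

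The easy equation and the even-index analysis are routine; the real difficulty is the odd-index, odd-exponent case. A single linear form ties together $n$, $p$, and $\log y$ without bounding all three, so the plan's delicate point is the order of operations --- first exploit $p\log y\approx n\log\delta$ to bound $p$, and only then bound $n$ --- together with the need, in the primitive-divisor step, to be sure the primitive prime divides $H_{n}$ exactly once (a ``Wall--Sun--Sun''-type subtlety, dispatched by examining the finitely many potential exceptional primes). Arranging this interplay so that only a finite, computable set of triples $(n,p,y)$ survives is the crux; it is precisely the analogue of this step that forces the restriction $y\le 100$ in the $k$-generalized setting studied in the present paper.
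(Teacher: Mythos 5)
First, a point of reference: the paper does not prove this theorem at all --- it is quoted from \cite{bravo2} as a known result --- so there is no in-paper proof to compare yours with, and I can only judge the proposal on its own terms. Your first half is correct and complete: the induction giving $Q_{n}\equiv 2\pmod 4$ shows $v_{2}(Q_{n})=1$, which is incompatible with $Q_{n}=y^{m}$ for $m\geq 2$. The second half, however, has both a concrete error and an unclosed main case. In the even-index case, from $(y^{p}-1)(y^{p}+1)=2P_{n}^{2}$ with both factors even and $P_{n}$ even, writing $y^{p}\mp 1=2a,2b$ gives $ab=2c^{2}$ with $\gcd(a,b)=1$, so one of $(y^{p}-1)/2,\,(y^{p}+1)/2$ is a perfect square and the other is \emph{twice} a square --- not ``one of $y^{p}\mp 1$ is a perfect square.'' The equations you actually reach are $y^{p}=2t^{2}\pm 1$, to which neither Lebesgue's theorem nor Mih\u{a}ilescu's applies; indeed $2\cdot 11^{2}+1=3^{5}$ shows that $y^{p}=2t^{2}+1$ has nontrivial solutions, so this case does not close as written and needs the extra arithmetic carried by $P_{n}$ (or a Ljunggren--Cohn type theorem on $x^{2}-2y^{2p}=\pm 1$).

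The odd-index, odd-prime-exponent case --- which you rightly identify as the crux --- is not actually resolved by your plan. Matveev applied to $n\log\delta-p\log y-\log 2$ with $B=\max(n,p)=n$ yields $n\ll(\log n)(\log y)$, and combining with $p\log y<n\log\delta+\log 2$ gives only $p\ll 1+\log n$: this is an absolute bound on $p$ only if $y$ (equivalently $n$) is already bounded, whereas Theorem \ref{ty} carries no restriction on $y$. Bounding the exponent absolutely for unbounded $y$ is precisely the step that forced the modular method in Bugeaud--Mignotte--Siksek, and a single three-term linear form will not do it. The primitive-divisor fallback has the same problem you half-acknowledge: you need the primitive prime $q\geq 2n-1$ of $Q_{n}$ to divide $H_{n}$ to the first power, and this Wall--Sun--Sun-type condition cannot be ``dispatched by examining the finitely many potential exceptional primes,'' since the relevant prime varies with $n$ and no upper bound on $v_{q}(Q_{n})$ is available. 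As proposed, the argument therefore does not confine $(n,p,y)$ to a finite, checkable set, and the theorem is not proved.
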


In this paper, we will handle the Diophantine equation 
\begin{equation}
Q_{n}^{(k)}=y^{m},~k,n,m,y\in 
\mathbb{Z}
^{+}\text{ with }k,y,m\geq 2.  \label{1.1}
\end{equation}%
We will show that the solutions $(n,m,y)$ of Diophantine equation (\ref{1.1}%
) with $2\leq y\leq 100$ are given by $(n,m,y)=(3,2,4),(3,4,2)$ for $k\geq 3$%
. Namely, $Q_{3}^{(k)}=16=2^{4}=4^{2}$ for $k\geq 3.$

\section{Preliminaries}

In this section, we will mention some facts and properties of $k-$%
generalized Pell and Pell-Lucas sequences. It can be seen that the
characteristic polynomial of these sequences is%
\begin{equation}
\Psi _{k}(x)=x^{k}-2x^{k-1}-\cdots -x-1.  \label{0.2}
\end{equation}%
Let us denote the roots of the polynomial in (\ref{0.2}) by $\alpha _{j}$
for $j=1,2,\ldots ,k.$\ We know from Lemma 1 given in \cite{zang} that the
polynomial $\Psi _{k}(x)$ has exactly one positive real root located between 
$2$ and $3.$ Particuarly, let $\alpha =\alpha (k)=\alpha _{1},$ be the
positive real root of the polynomial $\Psi _{k}(x)$. So, 
\begin{equation}
2<\alpha <3.  \label{0.7}
\end{equation}%
The other roots are strictly inside the unit circle.

In \cite{luca3}, the Binet-like formula for the $k$-generalized Pell number
is given by 
\begin{equation*}
P_{n}^{(k)}=\dsum\limits_{j=1}^{k}\frac{(\alpha _{j}-1)}{\alpha
_{j}^{2}-1+k(\alpha _{j}^{2}-3\alpha _{j}+1)}\alpha _{j}^{n}.
\end{equation*}%
If we follow the method given in \cite{luca3} to obtain the Binet-like
formula for $k$-generalized Pell-Lucas numbers, then we get 
\begin{equation}
Q_{n}^{(k)}=\dsum\limits_{j=1}^{k}\frac{2(\alpha _{j}-1)^{2}}{\alpha
_{j}^{2}-1+k(\alpha _{j}^{2}-3\alpha _{j}+1)}\alpha _{j}^{n}.  \label{0.8}
\end{equation}%
From \cite{bravo1} \emph{,} we can give the following lemma, which will be
used in the proof of Theorem \ref{T4}.

\begin{lemma}
\label{L6}Let $\alpha _{j}$'s for $j=1,2,\ldots ,k,$ be the roots of $\Psi
_{k}(x),$ and let $\alpha =\alpha _{1}$ be dominant root of $\Psi _{k}(x).$
Then for $j\geq 1$ and $k\geq 2,$ the inequality 
\begin{equation}
\left \vert \frac{(\alpha _{j}-1)}{\alpha _{j}^{2}-1+k(\alpha
_{j}^{2}-3\alpha _{j}+1)}\right \vert <1  \label{3.4}
\end{equation}%
holds.
\end{lemma}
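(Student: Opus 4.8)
The plan is to reduce \eqref{3.4} to an elementary size comparison by first pulling out a clean relation satisfied by every root $\alpha_j$, and then bounding the numerator and the denominator separately, treating the dominant root and the remaining roots differently. Multiplying the characteristic polynomial by $x-1$ and summing the resulting geometric series gives the identity
\begin{equation*}
(x-1)\Psi_k(x)=x^{k+1}-3x^{k}+x^{k-1}+1=x^{k-1}(x^{2}-3x+1)+1 .
\end{equation*}
Since $\Psi_k(0)=-1$ and $\Psi_k(1)=-k$, no root equals $0$ or $1$, and putting $x=\alpha_j$ yields $\alpha_j^{k-1}(\alpha_j^{2}-3\alpha_j+1)=-1$, equivalently $\alpha_j^{2}-3\alpha_j+1=-\alpha_j^{-(k-1)}$. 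Writing $N_j=\alpha_j-1$ for the numerator and $D_j=\alpha_j^{2}-1+k(\alpha_j^{2}-3\alpha_j+1)$ for the denominator in \eqref{3.4}, the claim is exactly $|N_j|<|D_j|$ (so in particular $D_j\neq 0$), and the relation above rewrites the denominator as $D_j=\alpha_j^{2}-1-k\,\alpha_j^{-(k-1)}$.

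For $j=1$ the root $\alpha:=\alpha_1$ is real with $2<\alpha<3$ by \eqref{0.7}. Since $\alpha>2$, we have $0<k\,\alpha^{-(k-1)}<k\,2^{-(k-1)}\le 1$ for every $k\ge 2$ (the quantity $k\,2^{-(k-1)}$ equals $1$ at $k=2$ and decreases afterwards), so $D_1=\alpha^{2}-1-k\,\alpha^{-(k-1)}>4-1-1=2>0$, whereas $|N_1|=\alpha-1<2$. Hence $|N_1/D_1|<1$.

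For $j\ge 2$ we have $|\alpha_j|<1$; set $t=|\alpha_j|\in(0,1)$ and clear the negative power:
\begin{equation*}
\frac{N_j}{D_j}=\frac{\alpha_j^{k-1}(\alpha_j-1)}{\alpha_j^{k+1}-\alpha_j^{k-1}-k}.
\end{equation*}
If $k\ge 4$, the numerator has modulus at most $t^{k-1}(t+1)<2$ and the denominator has modulus at least $k-t^{k+1}-t^{k-1}>k-2\ge 2$, so the quotient has modulus $<1$. If $k=3$, then $\alpha_2,\alpha_3$ are complex conjugates (indeed $\Psi_3$ is negative throughout $(-\infty,1]$, so $\alpha_1$ is its only real root); since the product of the roots of $x^{3}-2x^{2}-x-1$ equals $1$, this forces $t^{2}=1/\alpha_1<1/2$, and then the numerator has modulus at most $t^{2}(t+1)<1$ while $|\alpha_j^{4}-\alpha_j^{2}-3|\ge 3-t^{4}-t^{2}>\frac{9}{4}$, again giving modulus $<1$. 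If $k=2$, the only non-dominant root is $\alpha_2=1-\sqrt{2}$, and a direct computation gives $N_2=-\sqrt{2}$ and $D_2=3\alpha_2^{2}-6\alpha_2+1=4$, so $|N_2/D_2|=\sqrt{2}/4<1$. This exhausts all cases.

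I expect the only real obstacle to be the case $j\ge 2$ with $k\in\{2,3\}$: there the crude bound $|\alpha_j^{k+1}-\alpha_j^{k-1}-k|>k-2$ is vacuous when $k=2$ and too weak when $k=3$, since the relevant non-dominant roots sit fairly close to the unit circle, and one cannot avoid using explicit information about them — the exact value of the root when $k=2$, and the conjugate-pair structure together with the product-of-roots relation when $k=3$. Everything else is routine once the identity $(x-1)\Psi_k(x)=x^{k-1}(x^{2}-3x+1)+1$ is available, since it shows that $k(\alpha_j^{2}-3\alpha_j+1)$ is a small correction at the dominant root and that the constant $-k$ dominates the denominator at the non-dominant roots once $k\ge 4$. (One could also rewrite $N_j/D_j=\alpha_j^{k-2}/\Psi_k'(\alpha_j)$ via the same identity, reducing the lemma to $|\Psi_k'(\alpha_j)|>|\alpha_j|^{k-2}$, but this reformulation does not remove the small-$k$ difficulty.)
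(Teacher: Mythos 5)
Your proof is correct. Note first that the paper does not actually prove Lemma \ref{L6}: it imports the statement from \cite{bravo1} without argument, so there is no in-paper proof to compare against. Your argument is a valid self-contained substitute. The key identity $(x-1)\Psi_k(x)=x^{k-1}(x^2-3x+1)+1$ is the standard one for this family (it is also how \cite{luca3} and \cite{bravo1} analyze $g_k$), and your use of it is sound: the check that $0$ and $1$ are not roots legitimizes the substitution, the dominant-root case follows from $2<\alpha<3$ and $k\,2^{-(k-1)}\le 1$, and the case $j\ge 2$, $k\ge 4$ follows from the crude bounds $t^{k-1}(t+1)<2$ and $k-t^{k+1}-t^{k-1}>k-2\ge 2$. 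Your handling of the two small cases is also right: for $k=2$ the explicit root $1-\sqrt2$ gives $|N_2/D_2|=\sqrt2/4$, and for $k=3$ the sign analysis of $\Psi_3$ on $(-\infty,1]$ (its only critical point there is a negative local maximum) forces a conjugate pair, whence $|\alpha_j|^2=1/\alpha<1/2$ by the product-of-roots relation, which is enough to push both the numerator below $1$ and the denominator above $9/4$. The only stylistic caveat is that you correctly flag that the uniform estimate degenerates for $k\in\{2,3\}$ and that explicit root information is unavoidable there; this is exactly where a careless version of the argument would break, and you have closed it.
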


Throughout this paper, $\alpha $ denotes the positive root of the polynomial 
$\Psi _{k}(x)$. The relation between $\alpha $ and $P_{n}^{(k)}$ is given by 
\begin{equation}
\alpha ^{n-2}\leq P_{n}^{(k)}\leq \alpha ^{n-1}  \label{1.3}
\end{equation}%
for all $n\geq 1$. For a proof of (\ref{1.3}), see \cite{luca3}. Also, K\i
l\i \c{c} \cite{klc1} proved that 
\begin{equation}
P_{n}^{(k)}=F_{2n-1}  \label{1.2}
\end{equation}%
for all $1\leq n\leq k+1.$

Now, we prove a result concerning the relation between $P_{n}^{(k)}$ and $%
Q_{n}^{(k)}.$

\begin{lemma}
\label{L5}Let $k\geq 2$ be an integer. Then the relation 
\begin{equation}
Q_{n}^{(k)}=2\left( P_{n+1}^{(k)}-P_{n}^{(k)}\right)  \label{w}
\end{equation}%
always holds.
\end{lemma}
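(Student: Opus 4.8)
The plan is to prove the identity $Q_{n}^{(k)}=2\left(P_{n+1}^{(k)}-P_{n}^{(k)}\right)$ by showing that the sequence $\left(2(P_{n+1}^{(k)}-P_{n}^{(k)})\right)_{n\geq 2-k}$ satisfies the same order-$k$ linear recurrence as $\left(Q_{n}^{(k)}\right)_{n\geq 2-k}$ and has the same $k$ initial values; since a recurrence of order $k$ is determined by $k$ consecutive terms, the two sequences must coincide. Both $\left(P_{n}^{(k)}\right)$ and $\left(Q_{n}^{(k)}\right)$ satisfy the recurrence $X_{n}=2X_{n-1}+X_{n-2}+\cdots+X_{n-k}$, and linearity is preserved under the shift $P_{n}^{(k)}\mapsto P_{n+1}^{(k)}$, under differences, and under scaling by $2$, so the sequence $R_{n}:=2(P_{n+1}^{(k)}-P_{n}^{(k)})$ also satisfies this same recurrence for $n\geq 2$. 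Hence it remains only to match initial conditions.

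First I would write out the relevant initial values of $\left(P_{n}^{(k)}\right)$. From the defining data $(a,b,r)=(0,1,2)$ we have $P_{-(k-2)}^{(k)}=\cdots=P_{-1}^{(k)}=0$, $P_{0}^{(k)}=0$, $P_{1}^{(k)}=1$. Then I would check the $k$ indices $n=2-k,3-k,\dots,1$: for $n$ in the range $2-k\leq n\leq -1$ both $P_{n}^{(k)}$ and $P_{n+1}^{(k)}$ lie among the prescribed zeros (one must be slightly careful at $n=-1$, where $P_{0}^{(k)}=0$ as well), so $R_{n}=0=Q_{n}^{(k)}$; for $n=0$ we get $R_{0}=2(P_{1}^{(k)}-P_{0}^{(k)})=2(1-0)=2=Q_{0}^{(k)}$; and for $n=1$ we get $R_{1}=2(P_{2}^{(k)}-P_{1}^{(k)})$. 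Using the recurrence (or the relation (\ref{1.2}), $P_{n}^{(k)}=F_{2n-1}$ for $1\leq n\leq k+1$) one has $P_{2}^{(k)}=2P_{1}^{(k)}+P_{0}^{(k)}+\cdots=2$, so $R_{1}=2(2-1)=2=Q_{1}^{(k)}$. Thus all $k$ initial conditions agree, and the lemma follows by induction on $n$.

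The only real subtlety — not a deep obstacle, but the place where care is needed — is the bookkeeping at the negative indices: the two sequences are indexed from $n=2-k$, so I must verify agreement on a full block of $k$ consecutive indices, and within that block the shifted term $P_{n+1}^{(k)}$ occasionally pokes out of the "all zeros" range (namely at $n=-1$ and $n=0$), so these boundary indices have to be handled individually rather than lumped in with the generic zero case. Alternatively, and perhaps more cleanly, one could give a one-line proof directly from the Binet-like formulas (\ref{0.8}) and the formula for $P_{n}^{(k)}$ quoted from \cite{luca3}: the coefficient of $\alpha_{j}^{n}$ in $2(P_{n+1}^{(k)}-P_{n}^{(k)})$ is $\dfrac{2(\alpha_{j}-1)}{\alpha_{j}^{2}-1+k(\alpha_{j}^{2}-3\alpha_{j}+1)}(\alpha_{j}-1)=\dfrac{2(\alpha_{j}-1)^{2}}{\alpha_{j}^{2}-1+k(\alpha_{j}^{2}-3\alpha_{j}+1)}$, which is exactly the coefficient of $\alpha_{j}^{n}$ in (\ref{0.8}); summing over $j=1,\dots,k$ gives the identity at once. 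I would present the recurrence-plus-initial-conditions argument as the main proof (it avoids any appeal to the precise form of the Binet coefficients) and perhaps remark that the Binet formulas give an immediate alternative.
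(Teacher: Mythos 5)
Your proof is correct, but it takes a different route from the paper. The paper proves Lemma \ref{L5} via generating functions: it computes the generating function of the general sequence $G_{n}^{(k)}$ as $G(x)=\frac{a+(b-2a)x}{1-2x-x^{2}-\cdots-x^{k}}$, specializes to $P(x)=\frac{x}{1-2x-\cdots-x^{k}}$ and $Q(x)=\frac{2-2x}{1-2x-\cdots-x^{k}}$, and reads off the identity from $Q(x)=\left(\frac{2}{x}-2\right)P(x)$. Your main argument instead shows that $R_{n}:=2\left(P_{n+1}^{(k)}-P_{n}^{(k)}\right)$ satisfies the same order-$k$ recurrence as $Q_{n}^{(k)}$ for $n\geq 2$ and then matches the $k$ initial values on $n=2-k,\ldots,1$; your case analysis at the boundary indices $n=-1,0,1$ (including $P_{2}^{(k)}=2$) is accurate, and the conclusion by induction is sound. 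The two arguments carry the same information — the generating function encodes both the recurrence and the initial data at once, which is why the paper's proof is shorter, while your version is more elementary and makes the initial-condition bookkeeping explicit. Your suggested one-line alternative via the Binet-like coefficients is also a valid verification, though it is best kept as a remark: in the paper the formula (\ref{0.8}) for $Q_{n}^{(k)}$ is itself only asserted by analogy with the derivation in the cited reference, so leaning on it as the primary proof would rest the lemma on a formula whose own justification is not spelled out.
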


\begin{proof}
Let 
\begin{equation*}
G(x)=\dsum \limits_{i=0}^{\infty }G_{i}^{(k)}x^{i}
\end{equation*}%
be the generating function for $G_{n}^{(k)}$ given by the relation (\ref{0}%
). Using the relation (\ref{0}) with initial conditons, it can be seen that 
\begin{equation*}
G(x)=\frac{a+(b-2a)x}{1-2x-x^{2}-\cdots -x^{k}}
\end{equation*}%
for $r=2.$ So, the generating function $P(x)$ for $P_{n}^{(k)}$ is 
\begin{equation}
P(x)=\frac{x}{1-2x-x^{2}-\cdots -x^{k}}  \label{0.9}
\end{equation}%
and the generating function $Q(x)$ for $Q_{n}^{(k)}$ is 
\begin{equation}
Q(x)=\frac{2-2x}{1-2x-x^{2}-\cdots -x^{k}}.  \label{0.10}
\end{equation}%
From (\ref{0.9}) and (\ref{0.10}), the proof follows.
\end{proof}

Now, we will give some theorems and lemmas from \cite{luca3}, which will be
useful in the next section.

\begin{theorem}
\label{T1}\emph{(\cite{luca3}, Theorem 3.1)} Let $k\geq 2$ be an integer.
Then, for all $n\geq 2-k,$ we have 
\begin{equation*}
P_{n}^{(k)}=\dsum \limits_{i=1}^{k}g_{k}(\alpha _{i})\alpha _{i}^{n}\text{ }
\end{equation*}%
and%
\begin{equation*}
\left \vert P_{n}^{(k)}-g_{k}(\alpha )\alpha ^{n}\right \vert <\frac{1}{2},
\end{equation*}%
where $\alpha =\alpha _{1},\alpha _{2,}...,\alpha _{k}$ are the roots of the
characteristic equation $\Psi _{k}(x)=0$ and 
\begin{equation}
g_{k}(z)=\frac{z-1}{(k+1)z^{2}-3kz+k-1}.  \label{0.5}
\end{equation}
\end{theorem}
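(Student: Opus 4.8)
The plan is to read off the closed form from the generating function (\ref{0.9}) by partial fractions, and then to control the subdominant contribution using Lemma \ref{L6}.

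\textbf{The closed form.} The denominator of $P(x)=\frac{x}{1-2x-x^{2}-\cdots-x^{k}}$ equals $x^{k}\Psi_{k}(1/x)$, so it factors as $\prod_{j=1}^{k}(1-\alpha_{j}x)$; here the $\alpha_{j}$ are simple (Lemma~1 of \cite{zang}) and $\Psi_{k}(1)=-k\neq0$, so no $\alpha_{j}$ equals $1$. Writing $\Psi_{k}(x)=x^{k}-2x^{k-1}-\frac{x^{k-1}-1}{x-1}$ and multiplying by $x-1$ yields the key identity
\begin{equation*}
(x-1)\Psi_{k}(x)=x^{k+1}-3x^{k}+x^{k-1}+1 ,
\end{equation*}
whose derivative, evaluated at a root $\alpha_{j}$ of $\Psi_{k}$, gives
\begin{equation*}
(\alpha_{j}-1)\,\Psi_{k}'(\alpha_{j})=\alpha_{j}^{k-2}\bigl((k+1)\alpha_{j}^{2}-3k\alpha_{j}+k-1\bigr).
\end{equation*}
The usual residue computation for $P(x)=\sum_{j=1}^{k}c_{j}/(1-\alpha_{j}x)$ gives $c_{j}=\alpha_{j}^{k-2}/\Psi_{k}'(\alpha_{j})$, and substituting the displayed formula for $\Psi_{k}'(\alpha_{j})$ collapses $c_{j}$ to exactly $g_{k}(\alpha_{j})$. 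Expanding each summand geometrically gives $P_{n}^{(k)}=\sum_{j=1}^{k}g_{k}(\alpha_{j})\alpha_{j}^{n}$ for $n\geq0$, and since both sides satisfy (\ref{0}) this extends to all $n\geq2-k$.

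\textbf{The bound.} Set $E_{n}:=P_{n}^{(k)}-g_{k}(\alpha)\alpha^{n}=\sum_{j=2}^{k}g_{k}(\alpha_{j})\alpha_{j}^{n}$. Here $|\alpha_{j}|<1$ for $j\geq2$, and $|g_{k}(\alpha_{j})|<1$ by Lemma~\ref{L6} (its expression is precisely $g_{k}(\alpha_{j})$). Evaluating the identity at $\alpha$ gives $(k+1)\alpha^{2}-3k\alpha+k-1=\alpha^{2}-1-k\alpha^{1-k}$, hence $g_{k}(\alpha)=\bigl(\alpha+1-\frac{k}{(\alpha-1)\alpha^{k-1}}\bigr)^{-1}<\frac{1}{2}$, since $\alpha>2$ and $k\leq2^{k-1}$. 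Consequently, for $2-k\leq n\leq0$, where $P_{n}^{(k)}=0$, one has $|E_{n}|=g_{k}(\alpha)\alpha^{n}<\frac{1}{2}$ at once. For $1\leq n\leq k+1$ I would use $P_{n}^{(k)}=F_{2n-1}$ from (\ref{1.2}) and estimate $|F_{2n-1}-g_{k}(\alpha)\alpha^{n}|$ directly, using that $\alpha$ is exponentially close (in $k$) to the root $\varphi^{2}$ of $x^{2}-3x+1$, that $g_{k}(\alpha)$ is correspondingly close to $\frac{1}{\varphi^{2}+1}$, and the elementary identity $\frac{\varphi^{2n}}{\varphi^{2}+1}=\frac{\varphi^{2n-1}}{\sqrt5}$, which makes $F_{2n-1}-\frac{\varphi^{2n}}{\varphi^{2}+1}=\frac{\varphi^{-(2n-1)}}{\sqrt5}<\frac{1}{2}$. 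Finally, for $n\geq k+2$ the estimate $|E_{n}|\leq\bigl(\max_{j\geq2}|\alpha_{j}|\bigr)^{\,n-k-1}\sum_{j=2}^{k}|g_{k}(\alpha_{j})|\,|\alpha_{j}|^{k+1}$ reduces everything to a single base case at $n=k+1$.

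\textbf{Main obstacle.} The hard part is the uniformity in $k$ in the last two ranges. A plain triangle inequality is useless: from the identity at $\alpha_{j}$ one reads off $|\alpha_{j}|^{k-1}=|\alpha_{j}^{2}-3\alpha_{j}+1|^{-1}>\frac{1}{5}$, so \emph{all} $k-1$ subdominant roots approach the unit circle as $k\to\infty$ and $\sum_{j\geq2}|\alpha_{j}|^{n}$ can be nearly $k-1$; even the sharp pointwise bound $|g_{k}(\alpha_{j})|\leq\frac{2}{k-2}$ (for $k\geq3$) then only yields $|E_{n}|<2$. Closing this gap forces one to exploit the cancellation among complex-conjugate root pairs, equivalently the exact low-order values $P_{n}^{(k)}=0,F_{1},F_{3},\dots,F_{2k+1}$ together with the precise sizes of $g_{k}(\alpha)$ and of $\alpha-\varphi^{2}$, rather than bounding term by term. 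Arranging these estimates so that they are clean and $k$-uniform is the delicate point, and it is exactly why the cutoff $n=k+1$ of (\ref{1.2}) enters.
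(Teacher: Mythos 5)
First, note that the paper does not prove this theorem at all: it is imported verbatim from \cite{luca3} (Theorem 3.1 there), so there is no in-paper argument to compare against; your attempt has to be judged on its own.

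Your proof of the first assertion is correct and complete. The factorization of the denominator of $P(x)$ as $\prod_{j}(1-\alpha_{j}x)$, the identity $(x-1)\Psi_{k}(x)=x^{k+1}-3x^{k}+x^{k-1}+1$, the consequence $(\alpha_{j}-1)\Psi_{k}'(\alpha_{j})=\alpha_{j}^{k-2}\bigl((k+1)\alpha_{j}^{2}-3k\alpha_{j}+k-1\bigr)$, and the residue computation giving $c_{j}=g_{k}(\alpha_{j})$ all check out, and the extension from $n\geq 0$ to $n\geq 2-k$ via the recurrence is legitimate.

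The second assertion, however, is not proved, and you say as much in your final paragraph; the issue is that the gap you flag is not a technicality but the entire content of the statement. Your treatment of $2-k\leq n\leq 0$ is fine (there $|E_{n}|=g_{k}(\alpha)\alpha^{n}<1/2$), but the plan for $1\leq n\leq k+1$ via the approximations $\alpha\approx\varphi^{2}$ and $g_{k}(\alpha)\approx\frac{1}{\varphi+2}$ does not go through as sketched: since $\lambda=\varphi^{2}-\alpha$ is of order $\varphi^{-2k}$, one has $\bigl|\alpha^{n}-\varphi^{2n}\bigr|\asymp n\varphi^{2n-2}\lambda$, which for $n$ near $k+1$ is of size comparable to $k$, and likewise $\bigl|g_{k}(\alpha)-\frac{1}{\varphi+2}\bigr|\,\alpha^{n}$ is of size comparable to $k$ there. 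So each of the two error terms you would need to control individually exceeds $1/2$ by a factor of roughly $k$, and the claim can only survive through a cancellation between them that you neither exhibit nor quantify. The reduction of $n\geq k+2$ to the base case $n=k+1$ then inherits this unproven base case. In short: the Binet-like formula is established, but the bound $|P_{n}^{(k)}-g_{k}(\alpha)\alpha^{n}|<\frac{1}{2}$ --- the part of the theorem that is actually used later (e.g.\ in Lemma \ref{L4}(b)) --- remains unproved, and the route you propose for it would need a genuinely new idea (the cancellation among the subdominant roots, or an inductive/contraction argument on $E_{n}$ itself) rather than further bookkeeping.
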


Thus, from (\ref{0.8}), we can write%
\begin{equation}
Q_{n}^{(k)}=\dsum \limits_{i=1}^{k}(2\alpha _{i}-2)g_{k}(\alpha _{i})\alpha
_{i}^{n}.  \label{0.11}
\end{equation}

\begin{lemma}
\label{L1}\emph{(\cite{luca3}, Lemma 3.2) }Let $k,l\geq 2$ be integers. Then

\emph{(a)} If $k>l,$ then $\alpha (k)>\alpha (l),$ where $\alpha (k)$ and $%
\alpha (l)$ are the values of $\alpha $ relative to $k$ and $l,$
respectively.

\emph{(b)} $\varphi ^{2}(1-\varphi ^{-k})<\alpha <\varphi ^{2},$ where $%
\varphi =\frac{1+\sqrt{5}}{2}$ is the golden section.

\emph{(c)}$~g_{k}(\varphi ^{2})=\frac{1}{\varphi +2}.$

\emph{(d)} $0.276<g_{k}(\alpha )<0.5.$

\emph{(e)} If $k\geq 6,$ then 
\begin{equation*}
c_{k}<\alpha <\varphi ^{2},
\end{equation*}%
where 
\begin{equation*}
c_{k}=\frac{3k+\sqrt{5k^{2}+4}}{2k+2}.
\end{equation*}
\end{lemma}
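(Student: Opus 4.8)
The plan is to derive all five parts from one polynomial identity for $\Psi _{k}$ together with the sign information coming from (\ref{0.7}). Since $\Psi _{k}$ has a unique positive real root $\alpha $ and $\Psi _{k}(0)=-1<0$, we have $\Psi _{k}(t)<0$ for $0\leq t<\alpha $ and $\Psi _{k}(t)>0$ for $t>\alpha $; hence for any $t>0$, $\Psi _{k}(t)<0$ forces $t<\alpha $ and $\Psi _{k}(t)>0$ forces $t>\alpha $. The identity I would use is
\[
(x-1)\,\Psi _{k}(x)=x^{k+1}-3x^{k}+x^{k-1}+1=x^{k-1}(x^{2}-3x+1)+1,
\]
obtained by expanding $(x-1)\Psi _{k}(x)$ and telescoping $\sum_{i=0}^{k-2}x^{i}-\sum_{i=1}^{k-1}x^{i}=1-x^{k-1}$. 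Writing $x^{2}-3x+1=(x-\varphi ^{2})(x-\varphi ^{-2})$ (valid because $\varphi ^{2}=\tfrac{3+\sqrt{5}}{2}$, $\varphi ^{-2}=\tfrac{3-\sqrt{5}}{2}$, so $\varphi ^{2}-\varphi ^{-2}=\sqrt{5}$), and using $\varphi ^{2}=\varphi +1$ throughout, turns every part into an elementary estimate.

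For (b), evaluating the identity at $x=\varphi ^{2}$ gives $(\varphi ^{2}-1)\Psi _{k}(\varphi ^{2})=1>0$, and since $\varphi ^{2}-1=\varphi >0$ this yields $\Psi _{k}(\varphi ^{2})>0$, hence $\alpha <\varphi ^{2}$. For the lower bound I would set $x_{0}=\varphi ^{2}(1-\varphi ^{-k})$, so that $x_{0}-\varphi ^{2}=-\varphi ^{2-k}$ and $x_{0}-\varphi ^{-2}=\sqrt{5}-\varphi ^{2-k}$; the identity then gives $(x_{0}-1)\Psi _{k}(x_{0})=1-\varphi ^{k}(1-\varphi ^{-k})^{k-1}(\sqrt{5}-\varphi ^{2-k})$, and since $x_{0}\geq \varphi ^{2}-1=\varphi >1$ it suffices that the subtracted quantity exceed $1$. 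By Bernoulli's inequality $(1-\varphi ^{-k})^{k-1}\geq 1-(k-1)\varphi ^{-k}$ and by $\sqrt{5}-\varphi ^{2-k}\geq \sqrt{5}-1$ (for $k\geq 2$), that quantity is at least $(\varphi ^{k}-k+1)(\sqrt{5}-1)$, which is increasing in $k$ and equals $\varphi (\sqrt{5}-1)=2>1$ at $k=2$. For (a), $\Psi _{l}(\alpha (l))=0$ together with the identity for parameter $l$ gives $\alpha (l)^{2}-3\alpha (l)+1=-\alpha (l)^{1-l}$; substituting this into the identity for parameter $k$ gives $(\alpha (l)-1)\Psi _{k}(\alpha (l))=1-\alpha (l)^{\,k-l}<0$ because $\alpha (l)>2>1$ and $k>l$, hence $\alpha (l)<\alpha (k)$.

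For (c) and (d), note that the denominator of $g_{k}$ decomposes as $(k+1)z^{2}-3kz+k-1=k(z^{2}-3z+1)+(z^{2}-1)$. At $z=\varphi ^{2}$ the first bracket vanishes, leaving $\varphi ^{4}-1=3\varphi +1$, while the numerator is $\varphi ^{2}-1=\varphi $, so $g_{k}(\varphi ^{2})=\varphi /(3\varphi +1)=1/(\varphi +2)$, proving (c). For (d), the same decomposition with $\Psi _{k}(\alpha )=0$ (so $\alpha ^{2}-3\alpha +1=-\alpha ^{1-k}$) gives
\[
g_{k}(\alpha )=\frac{\alpha -1}{\alpha ^{2}-1-k\alpha ^{1-k}},
\]
whose denominator is positive since $\alpha ^{2}-1>3>k\alpha ^{1-k}$. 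The upper bound $g_{k}(\alpha )<\tfrac{1}{2}$ is equivalent to $(\alpha -1)^{2}>k\alpha ^{1-k}$, true because $(\alpha -1)^{2}>1\geq k/2^{k-1}>k\alpha ^{1-k}$ for $k\geq 3$ and checked by hand for $k=2$. For the lower bound, since $k\alpha ^{1-k}>0$ we get $g_{k}(\alpha )>\dfrac{\alpha -1}{\alpha ^{2}-1}=\dfrac{1}{\alpha +1}>\dfrac{1}{\varphi ^{2}+1}=\dfrac{1}{\varphi +2}>0.276$ using $\alpha <\varphi ^{2}$ and $\varphi ^{2}+1=\varphi +2$. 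Finally, for (e), squaring the definition of $c_{k}$ shows $c_{k}$ is the larger root of $(k+1)x^{2}-3kx+k-1=0$, i.e. $c_{k}^{2}-3c_{k}+1=-(c_{k}^{2}-1)/k$; plugging into the identity yields $(c_{k}-1)\Psi _{k}(c_{k})=1-c_{k}^{\,k-1}(c_{k}^{2}-1)/k$, and for $k\geq 6$ one has $c_{k}>2$, so $c_{k}^{\,k-1}(c_{k}^{2}-1)>3\cdot 2^{k-1}>k$, forcing $\Psi _{k}(c_{k})<0$ and hence $c_{k}<\alpha $; the inequality $c_{k}<\varphi ^{2}$ is a routine comparison of the two nested radicals, and with (b) this gives the two-sided bound.

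The one place where genuine care is needed --- the only thing I would call an obstacle --- is pinning down the numerical constants in the lower bounds of (b) and (d): one must confirm that Bernoulli's inequality is uniformly strong enough (so the small cases $k=2,3$ should be treated explicitly), and that $1/(\varphi +2)$ really does exceed $0.276$. Everything else is formal manipulation of the identity above, so I expect no serious difficulty.
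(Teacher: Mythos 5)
Your proposal is correct in all five parts: the identity $(x-1)\Psi_k(x)=x^{k-1}(x^2-3x+1)+1$ is verified by direct expansion, the sign analysis of $\Psi_k$ on $(0,\alpha)$ and $(\alpha,\infty)$ is sound, and the numerical checks (Bernoulli step at $k=2$, $1/(\varphi+2)>0.276$, $c_k>2$ for $k\geq 4$) all hold. Note that the paper itself gives no proof of this lemma, quoting it from \cite{luca3}; your derivation via the auxiliary polynomial $(x-1)\Psi_k(x)$ and the factorization $x^2-3x+1=(x-\varphi^2)(x-\varphi^{-2})$ is essentially the standard argument used in that reference, so there is no substantive divergence to report.
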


It is easy to observe that the inequality in Lemma \ref{L1} (e) holds for $%
k\geq 2.$ If we consider the function $g_{k}(x)$ defined in (\ref{0.5}) as a
function of a real variable, then it can be easily seen that the function $%
g_{k}(x)$ is decreasing and continuous in the interval $(c_{k},\infty )$
(also see Lemma 3.1 in \cite{luca3}). Therefore, by Lemma \ref{L1}, we have 
\begin{equation}
g_{k}(\varphi ^{2})<g_{k}(t)<g_{k}(\alpha )<\frac{1}{2}  \label{0.6}
\end{equation}%
for every $t\in (\alpha ,\varphi ^{2}).$ The inequality (\ref{0.6}) implies
that 
\begin{equation}
(k+1)t^{2}-3kt+k-1>2  \label{0.12}
\end{equation}%
for $t\in (\alpha ,\varphi ^{2}).$

For solving the equation (\ref{1.1}), we use linear forms in logarithms and
Baker's theory. For this, we will give some notions, theorem, and lemmas
related to linear forms in logarithms and Baker's Theory.

Let $\eta $ be an algebraic number of degree $d$ with minimal polynomial 
\begin{equation*}
a_{0}x^{d}+a_{1}x^{d-1}+\cdots +a_{d}=a_{0}\dprod\limits_{i=1}^{d}\left(
x-\eta ^{(i)}\right) \in \mathbb{Z}[x],
\end{equation*}%
where the $a_{i}$'s are integers with $\gcd (a_{0},\ldots ,a_{n})=1$ and $%
a_{0}>0$ and the $\eta ^{(i)}$'s are conjugates of $\eta .$ Then 
\begin{equation}
h(\eta )=\frac{1}{d}\left( \log a_{0}+\dsum\limits_{i=1}^{d}\log \left( \max
\left\{ |\eta ^{(i)}|,1\right\} \right) \right)  \label{2.1}
\end{equation}%
is called the logarithmic height of $\eta .$ In particular, if $\eta =a/b$
is a rational number with $\gcd (a,b)=1$ and $b\geq 1,$ then $h(\eta )=\log
\left( \max \left\{ |a|,b\right\} \right) .$

We give some properties of the logarithmic height whose proofs can be found
in \cite{yann}:

\begin{equation}
h(\eta \pm \gamma )\leq h(\eta )+h(\gamma )+\log 2,  \label{2.2}
\end{equation}%
\begin{equation}
h(\eta \gamma ^{\pm 1})\leq h(\eta )+h(\gamma ),  \label{2.3}
\end{equation}%
\begin{equation}
h(\eta ^{m})=|m|h(\eta ).  \label{2.4}
\end{equation}%
Now, we can deduce the following estimation for $h(g_{k}(\alpha ))$ from
Lemma 6\ given in \cite{bravo3}.

\begin{lemma}
\label{L7}Let $k\geq 2.$ Then $h(g_{k}(\alpha ))<5\log k.$
\end{lemma}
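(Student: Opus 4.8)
The plan is to bound $h(g_k(\alpha))$ by estimating each ingredient in the formula \eqref{2.1}. First I would identify the minimal polynomial, or at least a convenient defining polynomial, of $g_k(\alpha)$. Since $g_k(z)=\frac{z-1}{(k+1)z^2-3kz+k-1}$ and $\alpha$ is an algebraic integer of degree $k$ (a root of $\Psi_k$), the number $g_k(\alpha)$ lies in $\mathbb{Q}(\alpha)$, hence has degree $d$ dividing $k$; in particular $d\le k$. The conjugates of $g_k(\alpha)$ are among the numbers $g_k(\alpha_j)$ for $j=1,\dots,k$ (strictly, they are the values $g_k(\alpha^{(i)})$ over the conjugates of $g_k(\alpha)$, each appearing $k/d$ times if one works in the Galois closure, but for an upper bound on the height it suffices to control all the $g_k(\alpha_j)$).

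Next I would use the properties \eqref{2.2}, \eqref{2.3}, \eqref{2.4} of the height together with known bounds on $h(\alpha)$ to get a clean estimate. We have $h(\alpha)\le \frac{1}{k}\log(\text{leading coeff})+\dots$; since $\alpha$ is a root of the monic polynomial $\Psi_k(x)=x^k-2x^{k-1}-\cdots-1$ whose only root outside the unit disk is $\alpha$ itself with $2<\alpha<3$, we get $h(\alpha)=\frac{1}{k}\log\alpha<\frac{\log 3}{k}$, which is tiny. Then, writing $g_k(\alpha)=(\alpha-1)\big((k+1)\alpha^2-3k\alpha+k-1\big)^{-1}$, apply the subadditivity/submultiplicativity rules: $h(\alpha-1)\le h(\alpha)+h(1)+\log 2 = h(\alpha)+\log 2$; and $h\big((k+1)\alpha^2-3k\alpha+k-1\big)\le h((k+1)\alpha^2)+h(3k\alpha)+h(k-1)+2\log 2$, where $h((k+1)\alpha^2)\le \log(k+1)+2h(\alpha)$, $h(3k\alpha)\le\log(3k)+h(\alpha)$, $h(k-1)\le\log(k-1)$. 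Combining, $h(g_k(\alpha))\le h(\alpha-1)+h\big((k+1)\alpha^2-3k\alpha+k-1\big)$, and since $h(\alpha)<\frac{\log 3}{k}\le \frac{\log 3}{2}$, all the $h(\alpha)$ terms contribute a bounded constant, while the dominant terms are $\log(k+1)+\log(3k)+\log(k-1)\approx 3\log k$ plus additive constants from the various $\log 2$'s. A careful tally should yield something comfortably below $5\log k$ for all $k\ge 2$.

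Alternatively — and this is presumably closer to what the authors do since they cite Lemma 6 of \cite{bravo3} — one can invoke the analogous estimate already proved there for the Fibonacci-type coefficient and transfer it. The key algebraic input is that $g_k$ is essentially the same rational function appearing in Theorem \ref{T1} for $P_n^{(k)}$, and \cite{bravo3} treats the $k$-generalized Fibonacci coefficients $\frac{\alpha-1}{2+(k+1)(\alpha-2)}$ or similar; one checks the denominators differ by a bounded factor and the bound carries over with possibly a slightly different constant, still below $5\log k$. I would state the transfer explicitly: if $f_k(\alpha)$ denotes the Fibonacci coefficient with $h(f_k(\alpha))<c\log k$ from \cite{bravo3}, then $g_k(\alpha)=f_k(\alpha)\cdot u$ for an explicit algebraic number $u$ of small height, and \eqref{2.3} finishes it.

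The main obstacle is bookkeeping rather than conceptual: one must be careful that the height rules only give an \emph{upper} bound (the conjugates $g_k(\alpha_j)$ for $j\ge 2$ need not be small — indeed Lemma \ref{L6} only bounds them by $1$, not by a negative power of anything — so one cannot get savings from the archimedean places the way one does for $\alpha$ itself), and that the implicit constant absorbs the $\log 2$'s and the bounded $h(\alpha)$-contributions without pushing past $5\log k$ even at $k=2$. So the real work is checking the inequality holds uniformly down to the smallest value $k=2$, where $5\log 2\approx 3.47$ is a genuine constraint; here I would simply compute $h(g_2(\alpha))$ directly ($\alpha=1+\sqrt2$, $g_2(\alpha)=\frac{\sqrt2}{3\cdot 3 - 6(1+\sqrt2)+1}=\dots$ is an explicit quadratic surd) and verify the bound by hand, then argue monotonicity or a crude estimate for $k\ge 3$.
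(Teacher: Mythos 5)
Your second route---simply invoking Lemma 6 of \cite{bravo3}---is in fact all the paper does: that lemma is stated for exactly this Pell-type coefficient $g_k$, so no ``transfer'' from a Fibonacci coefficient is needed and the citation alone settles the statement. The problem is with your primary, worked-out route, which has a genuine quantitative gap. Tallying your own estimates gives $h(g_k(\alpha))\le 4h(\alpha)+3\log 2+\log(k+1)+\log(3k)+\log(k-1)$, i.e.\ roughly $3\log k$ plus an additive constant exceeding $5$; this is \emph{not} below $5\log k$ for $2\le k\le 6$ (e.g.\ at $k=5$ the tally is about $8.8$ against $5\log 5\approx 8.05$, and at $k=3$ about $7.8$ against $5\log 3\approx 5.49$; the inequality only starts holding around $k=7$). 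Your proposed patch---a hand computation at $k=2$ plus ``a crude estimate for $k\ge 3$''---therefore does not close the gap: one would have to treat $k=2,\dots,6$ individually, which you do not do.

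Moreover, your parenthetical claim that ``one cannot get savings from the archimedean places'' because Lemma \ref{L6} only bounds the conjugates by $1$ has it exactly backwards: the bound $|g_k(\alpha_j)|<1$ for \emph{all} $j$ is precisely the saving. It forces every term $\log\bigl(\max\{|g_k(\alpha_j)|,1\}\bigr)$ in \eqref{2.1} to vanish, so the height is controlled entirely by the leading coefficient of a defining polynomial. Concretely, $g_k(\alpha)$ is a root of the integer polynomial $\prod_{j=1}^{k}\bigl(((k+1)\alpha_j^2-3k\alpha_j+k-1)x-(\alpha_j-1)\bigr)$, whose leading coefficient has absolute value at most $(19k+9)(5k)^{k-1}$ (using $|\alpha_1|<3$ and $|\alpha_j|<1$ for $j\ge 2$), whence $h(g_k(\alpha))\le\frac{1}{k}\log\bigl((19k+9)(5k)^{k-1}\bigr)\le\log(5k)+O(1/k)$, which is comfortably below $5\log k$ for every $k\ge 2$ (at $k=2$ it gives about $3.08$ against $5\log 2\approx 3.47$). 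This is the argument behind the cited lemma and is the route to take for a self-contained proof.
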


The following theorem is deduced from Corollary 2.3 of Matveev \cite{Mtv}
and provides a large upper bound for the subscript $n$ in the equation (\ref%
{1.1}) (also see Theorem 9.4 in \cite{Bgud}).

\begin{theorem}
\label{T2} Assume that $\gamma _{1},\gamma _{2},\ldots ,\gamma _{t}$ are
positive real algebraic numbers in a real algebraic number field $\mathbb{K}$
of degree $D$, $b_{1},b_{2},\ldots ,b_{t}$ are rational integers, and 
\begin{equation*}
\Lambda :=\gamma _{1}^{b_{1}}\cdots \gamma _{t}^{b_{t}}-1
\end{equation*}%
is not zero. Then 
\begin{equation*}
|\Lambda |>\exp \left( -1.4\cdot 30^{t+3}\cdot t^{4.5}\cdot D^{2}(1+\log
D)(1+\log B)A_{1}A_{2}\cdots A_{t}\right) ,
\end{equation*}%
where 
\begin{equation*}
B\geq \max \left \{ |b_{1}|,\ldots ,|b_{t}|\right \} ,
\end{equation*}%
and $A_{i}\geq \max \left \{ Dh(\gamma _{i}),|\log \gamma
_{i}|,0.16\right
\} $ for all $i=1,\ldots ,t.$
\end{theorem}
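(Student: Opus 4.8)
The plan is to obtain Theorem \ref{T2} not as a new transcendence result but as a specialization of the general lower bound of Matveev \cite{Mtv} to the case of a real field and positive real algebraic numbers. The entire substance of the inequality is Baker's method as carried out by Matveev; what remains is to translate his statement for the additive linear form $L := b_{1}\log \gamma _{1}+\cdots +b_{t}\log \gamma _{t}$ into the multiplicative form $\Lambda =\gamma _{1}^{b_{1}}\cdots \gamma _{t}^{b_{t}}-1$ that appears here. First I would fix, for each $i$, the real determination $\log \gamma _{i}\in \mathbb{R}$, which is available because every $\gamma _{i}$ is a positive real in $\mathbb{K}$; then $L$ is a real number and $\Lambda =\exp (L)-1$, equivalently $L=\log (1+\Lambda )$.

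The key elementary step is the comparison between $|\Lambda |$ and $|L|$. If $|\Lambda |\geq 1/2$ there is nothing to prove, since the claimed right-hand side is the exponential of a large negative quantity and hence lies far below $1/2$. So I may assume $|\Lambda |<1/2$; in that regime the elementary estimate $|L|=|\log (1+\Lambda )|\leq 2|\Lambda |$ holds, so any lower bound for $|L|$ yields, after dividing by $2$ and absorbing the factor into the constant, a lower bound for $|\Lambda |$ of the same shape. Here the reality of $L$ is essential: since $\exp (L)=1$ with $L$ real forces $L=0$, the hypothesis $\Lambda \neq 0$ is exactly what guarantees $L\neq 0$, which is the non-degeneracy condition Matveev's theorem requires.

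Next I would invoke Matveev's Corollary 2.3 for the nonzero real linear form $L$ over the field $\mathbb{K}$ of degree $D$, with the quantities $A_{i}\geq \max \{Dh(\gamma _{i}),|\log \gamma _{i}|,0.16\}$ playing the role of his modified heights and $B\geq \max \{|b_{1}|,\ldots ,|b_{t}|\}$ bounding the coefficients. His statement produces a lower bound for $|L|$ of the form $\exp (-C\cdot D^{2}(1+\log D)(1+\log B)A_{1}\cdots A_{t})$ with an explicit constant $C=C(t)$ coming from his normalization, namely $C(t)=1.4\cdot 30^{t+3}\cdot t^{4.5}$. Combining this with the comparison of the previous paragraph yields precisely the bound stated in Theorem \ref{T2}; this is also the exact form recorded as Theorem 9.4 in \cite{Bgud}, which I would cite as confirmation that the packaging of constants matches.

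The genuine obstacle is not in this deduction, which is routine bookkeeping, but in Matveev's theorem itself: its proof is a full implementation of Baker's method, requiring the construction of an auxiliary function (or an interpolation determinant) vanishing to high order, a zero/multiplicity estimate of Philippon--Waldschmidt type to prevent that determinant from vanishing identically, and an extrapolation argument that converts the analytic smallness of $L$ into an arithmetic contradiction with effective constants. I would \emph{not} reprove this analytic core here; I would cite \cite{Mtv} for it and confine my argument to verifying that the hypotheses in force (real field $\mathbb{K}$, positive real $\gamma _{i}$, $\Lambda \neq 0$, and the prescribed quantities $A_{i}$ and $B$) are exactly those under which his Corollary 2.3 applies, so that the stated inequality follows formally.
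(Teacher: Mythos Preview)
Your proposal is correct and matches the paper's treatment: the paper does not prove Theorem~\ref{T2} at all but simply records it as a known consequence of Corollary~2.3 of Matveev~\cite{Mtv} (and points to Theorem~9.4 in~\cite{Bgud} for the same packaging), which is exactly what you do. Your added explanation of the passage from the additive linear form $L$ to the multiplicative quantity $\Lambda$ via $|\log(1+\Lambda)|\le 2|\Lambda|$ when $|\Lambda|<1/2$ is standard and goes slightly beyond what the paper bothers to spell out, but there is no difference in approach.
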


Now we give a lemma which was proved in \cite{Bravo}. It is a version of the
lemma given by Dujella and Peth\H{o} \cite{duj}. The lemma given in \cite%
{duj} is a variation of a result of Baker and Davenport \cite{Baker}. This
lemma will be used to reduce the upper bound for the subscript $n$ in the
equation (\ref{1.1}). For any real number $x,$ we let $||x||=\min \left \{
|x-n|:n\in 
\mathbb{Z}
\right \} $ be the distance from $x$ to the nearest integer.

\begin{lemma}
\label{L2}Let $M$ be a positive integer, let $p/q$ be a convergent of the
continued fraction of the irrational number $\gamma $ such that $q>6M,$ and
let $A,B,\mu $ be some real numbers with $A>0$ and $B>1.$ Let $\epsilon
:=||\mu q||-M||\gamma q||.$ If $\epsilon >0,$ then there exists no solution
to the inequality 
\begin{equation*}
0<|u\gamma -v+\mu |<AB^{-w},
\end{equation*}%
in positive integers $u,v,$ and $w$ with 
\begin{equation*}
u\leq M\text{ and }w\geq \frac{\log (Aq/\epsilon )}{\log B}.
\end{equation*}
\end{lemma}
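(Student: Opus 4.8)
The plan is to argue by contradiction: I would assume that, contrary to the assertion, there exist positive integers $u,v,w$ with $u\leq M$, $w\geq \log(Aq/\epsilon)/\log B$, and $0<|u\gamma -v+\mu |<AB^{-w}$, and then derive the absurdity $||\mu q||<||\mu q||$. The single external ingredient I need is the best-approximation property of continued fractions: for a convergent $p/q$ of the irrational $\gamma $, the numerator $p$ is the integer nearest to $q\gamma $, so that $||\gamma q||=|q\gamma -p|$. The hypothesis $q>6M$ guarantees in particular $q\geq 7>1$, so that $p/q$ is not the degenerate zeroth convergent $a_{0}/1$ (for which this nearest-integer property can fail); it also forces $||\gamma q||$ to be small, which is what makes the condition $\epsilon =||\mu q||-M||\gamma q||>0$ attainable in the first place.

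First I would convert the lower bound on $w$ into an upper bound on the right-hand side of the inequality. Since $B>1$, the hypothesis $w\geq \log(Aq/\epsilon)/\log B$ is equivalent to $B^{w}\geq Aq/\epsilon$, hence $AB^{-w}\leq \epsilon /q$. Combined with the assumed inequality this gives $|u\gamma -v+\mu |<\epsilon /q$, and after multiplying by $q>0$,
\[
q\,|u\gamma -v+\mu |<\epsilon =||\mu q||-M\,||\gamma q||.
\]

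Next I would extract a lower bound for $||\mu q||$. Writing $\delta :=q\gamma -p$, so that $|\delta |=||\gamma q||$ by the best-approximation property, I expand
\[
q(u\gamma -v+\mu )=u(p+\delta )-vq+\mu q=(up-vq)+u\delta +\mu q,
\]
which rearranges to $\mu q-(vq-up)=q(u\gamma -v+\mu )-u\delta $. Since $vq-up\in \mathbb{Z}$, the definition of $||\cdot ||$ gives $||\mu q||\leq |\mu q-(vq-up)|$, and the triangle inequality together with $u\leq M$ yields
\[
||\mu q||\leq q\,|u\gamma -v+\mu |+u\,||\gamma q||<\big(||\mu q||-M\,||\gamma q||\big)+M\,||\gamma q||=||\mu q||,
\]
the strict step using the displayed inequality above and $u\,||\gamma q||\leq M\,||\gamma q||$. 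This is the desired contradiction, so no such triple $(u,v,w)$ exists.

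The only genuinely delicate point is the choice of the integer one subtracts from $\mu q$: it must be exactly $vq-up$, so that the remaining terms are precisely $q(u\gamma -v+\mu )$ and $-u\delta $ (both already controlled) and so that the final bound closes up to $||\mu q||$ with no slack. I expect the main care to lie in keeping the inequalities strict in the right places — the strictness of $q|u\gamma -v+\mu |<\epsilon $ is what turns the conclusion into a genuine contradiction rather than a mere equality — and in correctly invoking the nearest-integer characterization of $||\gamma q||$ for a convergent, which is the point where the continued-fraction hypothesis on $p/q$ enters.
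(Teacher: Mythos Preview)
Your argument is correct and is essentially the standard proof of the Dujella--Peth\H{o}/Baker--Davenport reduction lemma. Note, however, that the paper does not supply its own proof of Lemma~\ref{L2}: it merely quotes the statement from \cite{Bravo} (a variant of \cite{duj}) and uses it as a black box, so there is no in-paper argument to compare against. Your write-up would serve perfectly well as a self-contained replacement for that citation; the one point worth tightening is the appeal to the ``best-approximation property'': all you actually use is that $p$ is the integer nearest to $q\gamma$, which for a convergent follows from $|q\gamma-p|<1/q_{\mathrm{next}}\leq 1/2$ once $q\geq 2$, and your observation $q>6M\geq 6$ already secures this.
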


The following lemma can be found in \cite{weger}.

\begin{lemma}
\label{L3} Let $a,x\in 
\mathbb{R}
.$ If $0<a<1$ and $\left \vert x\right \vert <a,$ then 
\begin{equation*}
\left \vert \log (1+x)\right \vert <\frac{-\log (1-a)}{a}\cdot \left \vert
x\right \vert
\end{equation*}%
and 
\begin{equation*}
\left \vert x\right \vert <\frac{a}{1-e^{-a}}\cdot \left \vert e^{x}-1\right
\vert .
\end{equation*}
\end{lemma}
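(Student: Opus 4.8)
The plan is to treat the two displayed inequalities separately, reducing each to the monotonicity of a single auxiliary one-variable function. Throughout I may assume $x\neq 0$, since for $x=0$ both sides of each inequality vanish. The common constants are recognized as values of these auxiliary functions: writing $f(t)=\frac{-\log(1-t)}{t}$ on $(0,1)$ and $\phi(t)=\frac{t}{e^{t}-1}$ on $\mathbb{R}\setminus\{0\}$, the right-hand constants are exactly $f(a)$ and $\phi(-a)=\frac{a}{1-e^{-a}}$, respectively. So both statements become comparisons of a function value at the observed point against its value at the extreme point $a$ (resp.\ $-a$).

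For the first inequality I would first record that $f$ is strictly increasing on $(0,1)$; this is immediate from its power series $f(t)=\sum_{n\ge 1}t^{n-1}/n$, whose coefficients are positive. Then I split on the sign of $x$. If $x<0$, put $s=|x|\in(0,a)$; then $|\log(1+x)|=-\log(1-s)=s\,f(s)<s\,f(a)$, which is the claim. If $x>0$, I use $\log(1+x)<-\log(1-x)$ — a consequence of $\log(1-x^{2})<0$ — to get $\frac{\log(1+x)}{x}<f(x)<f(a)$, again using $x<a$ and monotonicity of $f$; multiplying by $x$ finishes this case. Alternatively, the triangle inequality applied to the series of $\log(1+x)$ gives $|\log(1+x)|\le -\log(1-|x|)$ uniformly, after which the monotonicity of $f$ closes both signs at once.

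For the second inequality the key observation is that, regardless of the sign of $x$, the numerator $x$ and the denominator $e^{x}-1$ carry the same sign, so $\frac{|x|}{|e^{x}-1|}=\frac{x}{e^{x}-1}=\phi(x)$. I would then show that $\phi$ is strictly decreasing on all of $\mathbb{R}$ by differentiating: with $N(t)=e^{t}-1-te^{t}$ one computes $\phi'(t)=N(t)/(e^{t}-1)^{2}$ and $N'(t)=-te^{t}$, whence $N(t)<N(0)=0$ for all $t\neq 0$ and therefore $\phi'<0$. Since $|x|<a$ forces $x>-a$, strict monotonicity yields $\phi(x)<\phi(-a)=\frac{a}{1-e^{-a}}$, i.e.\ $|x|<\frac{a}{1-e^{-a}}|e^{x}-1|$.

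The arguments are elementary, so there is no deep obstacle; the only points requiring care are the bookkeeping of absolute values and the sign split, which I handle by passing to the single signed quantities $x/(e^{x}-1)$ and $s=|x|$, and securing \emph{strict} inequality throughout — this is what rules out the degenerate endpoints and is guaranteed by the strict monotonicity of $f$ and $\phi$ together with the strict bound $|x|<a$. I expect the only mild nuisance to be verifying the sign of $N(t)$ cleanly (equivalently, the strict decrease of $\phi$), which is the one place a short monotonicity computation is genuinely needed.
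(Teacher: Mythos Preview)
Your argument is correct. The paper does not actually prove this lemma; it simply cites de Weger's monograph \cite{weger}, so there is no in-text proof to compare against. Your reduction to the monotonicity of $f(t)=-\log(1-t)/t$ and $\phi(t)=t/(e^{t}-1)$ is the natural elementary route and matches what one finds in that reference.

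Two very minor remarks. First, at $x=0$ both sides of each inequality vanish, so the \emph{strict} inequalities as stated fail there; the lemma should be read for $x\neq 0$ (which is how the paper applies it, having already shown $\Lambda_1\neq 0$), so your opening reduction is fine in spirit but the phrase ``both sides vanish'' does not dispose of a strict inequality. Second, your derivative computation gives $\phi'<0$ only on $\mathbb{R}\setminus\{0\}$, hence strict decrease on each of $(-\infty,0)$ and $(0,\infty)$ separately; to compare $\phi(x)$ for $x>0$ against $\phi(-a)$ with $-a<0$ you need one more word---either extend $\phi$ continuously by $\phi(0)=1$, or note directly from $e^{t}>1+t$ that $\phi(t)<1$ for $t>0$ and $\phi(t)>1$ for $t<0$. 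Neither point affects the substance of your proof.
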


\section{Main Theorem}

Now, we prove a lemma, which will be used in the next theorem.

\begin{lemma}
\label{L4}Let $k\geq 2$ be an integer. Then

\emph{(a)} $\alpha ^{n-1}<Q_{n}^{(k)}<2\alpha ^{n}$ for all $n\geq 1.$

\emph{(b)} $\left \vert Q_{n}^{(k)}-(2\alpha -2)g_{k}(\alpha )\alpha
^{n}\right \vert <2$ for all $n\geq 2-k.$

\emph{(c)} $Q_{n}^{(k)}=2F_{2n}$ for $1\leq n\leq k.$
\end{lemma}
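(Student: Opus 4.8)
The plan is to deduce all three parts from Lemma \ref{L5}, which rewrites $Q_{n}^{(k)}$ as $2(P_{n+1}^{(k)}-P_{n}^{(k)})$, together with the facts about $P_{n}^{(k)}$ recorded in the Preliminaries. For part (c), I would simply combine Lemma \ref{L5} with the identity (\ref{1.2}): if $1\leq n\leq k$, then both $n$ and $n+1$ lie in the range $[1,k+1]$ on which $P_{m}^{(k)}=F_{2m-1}$, so $Q_{n}^{(k)}=2(P_{n+1}^{(k)}-P_{n}^{(k)})=2(F_{2n+1}-F_{2n-1})=2F_{2n}$ by the Fibonacci recurrence $F_{2n+1}=F_{2n}+F_{2n-1}$.

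For part (b), the key observation is that $(2\alpha-2)g_{k}(\alpha)\alpha^{n}=2g_{k}(\alpha)(\alpha^{n+1}-\alpha^{n})$, so that Lemma \ref{L5} gives
\[
Q_{n}^{(k)}-(2\alpha-2)g_{k}(\alpha)\alpha^{n}=2\bigl(P_{n+1}^{(k)}-g_{k}(\alpha)\alpha^{n+1}\bigr)-2\bigl(P_{n}^{(k)}-g_{k}(\alpha)\alpha^{n}\bigr).
\]
Since $n\geq 2-k$ forces $n+1\geq 2-k$ as well, Theorem \ref{T1} bounds each of the two parenthesized quantities in absolute value by $1/2$, and the triangle inequality yields the claimed bound $<2\cdot\tfrac12+2\cdot\tfrac12=2$.

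For part (a), the upper bound is immediate: by Lemma \ref{L5}, the positivity $P_{n}^{(k)}>0$ for $n\geq 1$, and the right-hand inequality of (\ref{1.3}) applied with index $n+1$, we get $Q_{n}^{(k)}=2(P_{n+1}^{(k)}-P_{n}^{(k)})<2P_{n+1}^{(k)}\leq 2\alpha^{n}$. For the lower bound, $n=1$ is checked by hand ($Q_{1}^{(k)}=2>1=\alpha^{0}$), and for $n\geq 2$ I would expand via the recurrence for $P^{(k)}$: $P_{n+1}^{(k)}-P_{n}^{(k)}=P_{n}^{(k)}+P_{n-1}^{(k)}+\cdots+P_{n+1-k}^{(k)}\geq P_{n}^{(k)}+P_{n-1}^{(k)}\geq \alpha^{n-2}+\alpha^{n-3}$ using the left-hand inequality of (\ref{1.3}), so $Q_{n}^{(k)}\geq 2\alpha^{n-3}(\alpha+1)$. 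The desired inequality $2\alpha^{n-3}(\alpha+1)>\alpha^{n-1}$ then reduces to $\alpha^{2}<2(\alpha+1)$, i.e. $\alpha<1+\sqrt{3}$, which follows from $\alpha<\varphi^{2}$ (Lemma \ref{L1}(b)).

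The one genuinely delicate point is this lower bound in (a): the naive estimate $Q_{n}^{(k)}\geq 2P_{n}^{(k)}\geq 2\alpha^{n-2}$ narrowly fails to give $\alpha^{n-1}$ precisely because $\alpha>2$, so one is forced to retain a second term of the $P^{(k)}$-recurrence and to use the sharp bound $\alpha<\varphi^{2}$; everything else is routine bookkeeping built on Lemma \ref{L5}.
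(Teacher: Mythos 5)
Your proof is correct and follows the same overall strategy as the paper: all three parts are deduced from Lemma \ref{L5} combined with (\ref{1.2}), (\ref{1.3}) and Theorem \ref{T1}; your arguments for (b) and (c) are essentially verbatim what the paper does. The only real divergence is the lower bound in (a), the step you flag as delicate. The paper sidesteps the difficulty you describe by a slightly slicker use of the recurrence: writing $2(P_{n+1}^{(k)}-P_{n}^{(k)})=P_{n+1}^{(k)}+(P_{n+1}^{(k)}-2P_{n}^{(k)})=P_{n+1}^{(k)}+P_{n-1}^{(k)}+\cdots+P_{n-k+1}^{(k)}\geq P_{n+1}^{(k)}\geq\alpha^{n-1}$, which needs only the left-hand side of (\ref{1.3}) at index $n+1$ and no numerical comparison of $\alpha$ with $1+\sqrt{3}$. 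Your variant --- keeping the two terms $2P_{n}^{(k)}+2P_{n-1}^{(k)}$, bounding them below by $2\alpha^{n-3}(\alpha+1)$, and invoking $\alpha<\varphi^{2}<1+\sqrt{3}$ --- is also valid (note that (\ref{1.3}) does give $P_{n-1}^{(k)}\geq\alpha^{n-3}$ for all $n\geq2$, including $n=2$ where the exponent is negative), but it is a little heavier than necessary; the paper's regrouping shows the bound is not actually tight. Your upper bound in (a), dropping $-P_{n}^{(k)}$ outright, is a harmless simplification of the paper's $2(\alpha^{n}-\alpha^{n-2})<2\alpha^{n}$.
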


\begin{proof}
We have the relation $Q_{n}^{(k)}=2\left( P_{n+1}^{(k)}-P_{n}^{(k)}\right) $
by Lemma \ref{L5}.

(a) Using (\ref{1.3}), we get 
\begin{equation*}
Q_{n}^{(k)}=2\left( P_{n+1}^{(k)}-P_{n}^{(k)}\right) \leq 2\left( \alpha
^{n}-\alpha ^{n-2}\right) \leq 2\alpha ^{n-2}\left( \alpha ^{2}-1\right)
<2\alpha ^{n},
\end{equation*}%
and also 
\begin{equation*}
Q_{n}^{(k)}=2\left( P_{n+1}^{(k)}-P_{n}^{(k)}\right) \geq
P_{n+1}^{(k)}+P_{n-1}^{(k)}+P_{n-2}^{(k)}+\cdots +P_{n-k+1}^{(k)}>\alpha
^{n-1}.
\end{equation*}

(b) By using Theorem \ref{T1}, we obtain 
\begin{eqnarray*}
\left \vert Q_{n}^{(k)}-(2\alpha -2)g_{k}(\alpha )\alpha ^{n}\right \vert
&=&\left \vert 2P_{n+1}^{(k)}-2P_{n}^{(k)}-2g_{k}(\alpha )\alpha
^{n+1}+2g_{k}(\alpha )\alpha ^{n}\right \vert \\
&\leq &2\left \vert P_{n+1}^{(k)}-g_{k}(\alpha )\alpha ^{n+1}\right \vert
+2\left \vert P_{n}^{(k)}-g_{k}(\alpha )\alpha ^{n}\right \vert \\
&<&2.
\end{eqnarray*}

(c) From the equalities\ (\ref{1.2}) and (\ref{w}), the proof follows.
\end{proof}

\begin{theorem}
\label{T4}All solutions of Diophantine equation \emph{(\ref{1.1}) }satisfies
the inequality \textit{\ }%
\begin{equation}
n<1.64\cdot 10^{13}\cdot k^{4}\cdot (\log k)^{2}\cdot \log y\cdot \log n.
\label{3.5}
\end{equation}
\end{theorem}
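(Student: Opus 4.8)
The plan is to run the standard Baker-type linear-forms-in-logarithms machinery on equation (\ref{1.1}), using Lemma \ref{L4}(b) to replace $Q_n^{(k)}$ by its dominant term $(2\alpha-2)g_k(\alpha)\alpha^n$. First I would rewrite (\ref{1.1}) as $y^m - (2\alpha-2)g_k(\alpha)\alpha^n = Q_n^{(k)} - (2\alpha-2)g_k(\alpha)\alpha^n$, whose absolute value is $<2$ by Lemma \ref{L4}(b). Dividing through by $(2\alpha-2)g_k(\alpha)\alpha^n$ and using the lower bound $Q_n^{(k)}>\alpha^{n-1}$ from Lemma \ref{L4}(a) (so that $(2\alpha-2)g_k(\alpha)\alpha^n$ is comparable to $\alpha^n$, with the constants controlled via $2<\alpha<3$ and (\ref{0.6})), I obtain an inequality of the shape
\begin{equation*}
\left| y^m \alpha^{-n} \left((2\alpha-2)g_k(\alpha)\right)^{-1} - 1 \right| < \frac{c}{\alpha^n}
\end{equation*}
for an absolute (small) constant $c$. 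This is exactly $|\Lambda|$ with $\Lambda = \gamma_1^{b_1}\gamma_2^{b_2}\gamma_3^{b_3}-1$, where $\gamma_1=y$, $\gamma_2=\alpha$, $\gamma_3=(2\alpha-2)g_k(\alpha)$, and $(b_1,b_2,b_3)=(m,-n,-1)$, so $t=3$.

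Next I would verify $\Lambda\neq 0$: if it were zero, then $y^m = (2\alpha-2)g_k(\alpha)\alpha^n$ would be an algebraic number that is simultaneously a rational integer and, upon applying a nontrivial Galois conjugation sending $\alpha$ to another root $\alpha_j$ inside the unit circle, would have a conjugate of modulus $<2$ (using Lemma \ref{L6} together with $|\alpha_j|<1$), contradicting $y^m\geq 2^2=4$ for $n$ large — the usual size argument. Then I would assemble the Matveev parameters: the field is $\mathbb{K}=\mathbb{Q}(\alpha)$ so $D=k$; for $B$ I need $B\geq\max\{m,n,1\}$, and since $y^m = Q_n^{(k)} < 2\alpha^n < \varphi^{2n+2}$ by Lemma \ref{L4}(a), while $y\geq 2$, we get $m < (2n+2)\log\varphi/\log 2 < 3n$, so $B:=3n$ works. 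For the heights: $h(\gamma_1)=\log y$; $h(\gamma_2)=h(\alpha)\leq \tfrac1k\log\alpha < \tfrac{\log 3}{k}$ (since $\alpha$ is an algebraic integer of degree $k$ with all conjugates bounded, this can be made clean, e.g. $h(\alpha)<\tfrac{\log 3}{k}$ so $A_2 = k h(\gamma_2) < \log 3$ suffices, and one may just take $A_2=0.7$ or similar); and for $\gamma_3$, using (\ref{2.2})--(\ref{2.4}) together with $h(2\alpha-2)\leq h(\alpha)+\log 2 + \log 2$ and Lemma \ref{L7}, I get $h(\gamma_3) < 5\log k + c'$, so $A_3 := k(5\log k + c')$ is a valid choice — this is the factor that injects the $k\log k$ into the bound. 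Thus $A_1 A_2 A_3 \asymp \log y \cdot k\log k$ up to constants, $D^2(1+\log D) \asymp k^2\log k$, and $(1+\log B)\asymp \log n$, and $1.4\cdot 30^{6}\cdot 3^{4.5}$ is an absolute constant.

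Finally, combining Matveev's lower bound $|\Lambda| > \exp(-1.4\cdot30^{6}\cdot3^{4.5}\cdot D^2(1+\log D)(1+\log B)A_1A_2A_3)$ with the upper bound $|\Lambda| < c\alpha^{-n}$, taking logarithms gives
\begin{equation*}
(n-c'')\log\alpha < 1.4\cdot30^{6}\cdot3^{4.5}\cdot k^2(1+\log k)\cdot(1+\log 3n)\cdot \log y\cdot k(5\log k+c'),
\end{equation*}
and then dividing by $\log\alpha > \log 2$, absorbing $1+\log 3n$ into $C\log n$, absorbing $(1+\log k)(5\log k+c')$ into $C(\log k)^2$, and checking the numerics of the resulting absolute constant against $1.64\cdot10^{13}$ yields (\ref{3.5}). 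The main obstacle I anticipate is purely bookkeeping: obtaining a clean, explicitly small bound for $h(\gamma_3)$ (hence for $A_3$) and for $h(\alpha)$, and then carefully tracking all the small absolute constants $c,c',c''$ (which arise from $|\Lambda|<c\alpha^{-n}$, from $m<3n$, and from the $n-c''$ shift) so that the final constant comes out below $1.64\cdot10^{13}$; none of the individual steps is deep, but the constant in (\ref{3.5}) is sharp enough that the estimates on the heights must be done with some care rather than crudely.
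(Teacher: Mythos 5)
Your proposal matches the paper's proof essentially step for step: the same decomposition via Lemma \ref{L4}(b), the same three Matveev parameters $\left(y,\alpha,(2\alpha-2)g_k(\alpha)\right)$ in $\mathbb{Q}(\alpha)$ with $D=k$, the same Galois-conjugation argument for $\Lambda\neq 0$, and the same height estimates (the paper takes $A_3=8k\log k$ via $h(2)+h(\alpha-1)+h(g_k(\alpha))$, and $B=1.73n$ in place of your $3n$). The only detail you omit is the trivial range $1\le n\le k$ (handled in the paper via $Q_n^{(k)}=2F_{2n}$ and the Luca--Patel theorem), which does not affect the substance of the argument.
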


\begin{proof}
Assume that the Diophantine equation (\ref{1.1}) holds. If $1\leq n\leq k,$
then we have $Q_{n}^{(k)}=2F_{2n}=y^{m}$ by (\ref{1.2}) and (\ref{w}). From
here, by Theorem 2 given in \cite{patel}, $F_{2n}=2^{m-1}(y/2)^{m}$ implies
that $n\leq 6$. Thus, the ldentity (\ref{3.5}) is satisfied. Then we suppose
that $n\geq k+1.$ In this case, $n\geq 3.$ Let $\alpha $ be positive real
root of $\Psi _{k}(x)$ given in (\ref{0.2}). Then $2<\alpha <\varphi ^{2}<3$
by (\ref{0.7}) and Lemma \ref{L1}. Using Lemma \ref{L4} (a), we get 
\begin{equation*}
\alpha ^{n-1}<y^{m}<2\alpha ^{n}.
\end{equation*}%
Making necessary calculations, we obtain 
\begin{equation}
m<\frac{\log 2}{\log y}+n\frac{\log \varphi ^{2}}{\log y}<1+n\frac{\log
\varphi ^{2}}{\log 2}<1.73n  \label{3.1}
\end{equation}%
for $n\geq 3$. Now, let us rearrange the equation (\ref{1.1}) by using Lemma %
\ref{L4} (b). Thus, we have 
\begin{equation}
\left\vert y^{m}-(2\alpha -2)g_{k}(\alpha )\alpha ^{n}\right\vert <2.
\label{3.2}
\end{equation}%
If we divide both sides of the inequality (\ref{3.2}) by $(2\alpha
-2)g_{k}(\alpha )\alpha ^{n},$ we get 
\begin{eqnarray}
\left\vert y^{m}\alpha ^{-n}((2\alpha -2)g_{k}(\alpha
))_{{}}^{-1}-1\right\vert &<&\frac{2}{(2\alpha -2)g_{k}(\alpha )\alpha ^{n}}%
\leq \frac{\alpha ^{-n}}{2\cdot 0.276}  \label{3.3} \\
&<&\frac{1.82}{\alpha ^{n}}  \notag
\end{eqnarray}%
by Lemma \ref{L1} (d). In order to use the result of Theorem \ref{T2}, we
take 
\begin{equation*}
\left( \gamma _{1},b_{1}\right) :=\left( y,m\right) ,~\left( \gamma
_{2},b_{2}\right) :=\left( \alpha ,-n\right) ,~\left( \gamma
_{3},b_{3}\right) :=\left( (2\alpha -2)g_{k}(\alpha ),-1\right) .
\end{equation*}%
The number field containing $\gamma _{1}$,$\gamma _{2}$, and $\gamma _{3}$
are $\mathbb{K}=%
\mathbb{Q}
(\alpha ),$ which has degree $D=k.$ We show that the number 
\begin{equation*}
\Lambda _{1}:=y^{m}\alpha ^{-n}((2\alpha -2)g_{k}(\alpha ))^{-1}-1
\end{equation*}%
is nonzero. Contrast to this, assume that $\Lambda _{1}=0.$ Then 
\begin{equation*}
y^{m}=(2\alpha -2)g_{k}(\alpha )\alpha ^{n}=\frac{(2\alpha -2)\left( \alpha
-1\right) }{(k+1)\alpha ^{2}-3k\alpha +k-1}\alpha ^{n}.
\end{equation*}%
Conjugating the above equality by some automorphisim of the Galois group of
the splitting field of $\Psi _{k}(x)$ over $%
\mathbb{Q}
$ and taking absolute values, we get 
\begin{equation*}
y^{m}=\left\vert \frac{(2\alpha _{i}-2)\left( \alpha _{i}-1\right) }{%
(k+1)\alpha _{i}^{2}-3k\alpha _{i}+k-1}\alpha _{i}^{n}\right\vert
\end{equation*}%
for some $i>1,$ where $\alpha =\alpha _{1},\alpha _{2},\ldots ,\alpha _{k}$
are the roots of $\Psi _{k}(x)$. As $|\alpha _{i}|<1$, using the inequality (%
\ref{3.4}), we can write the inequality%
\begin{eqnarray*}
y^{m} &=&\left\vert 2\alpha _{i}-2\right\vert \left\vert \frac{\left( \alpha
_{i}-1\right) }{(k+1)\alpha _{i}^{2}-3k\alpha _{i}+k-1}\right\vert
\left\vert \alpha _{i}\right\vert ^{n} \\
&<&4.
\end{eqnarray*}%
Hence, we have $y^{m}<4,$ which is impossible as $m,y\geq 2.$ Therefore $%
\Lambda _{1}\neq 0.$ Moreover, since $h(y)=\log y,$ $h(\gamma _{2})=\dfrac{%
\log \alpha }{k}<\dfrac{\log 3}{k}$ by (\ref{2.1}), we can take $%
A_{1}:=k\log y,~A_{2}:=\log 3.$ Now, let find the approximate value of $%
h((2\alpha -2)g_{k}(\alpha )).$ We know that $h(g_{k}(\alpha )<5\log k$ by
Lemma \ref{L7}. Besides, since the minimal polynomial of $\alpha -1$ over
the integers is 
\begin{equation*}
(x+1)^{k}-2(x+1)^{k-1}-(x+1)^{k-1}-\cdots -(x+1)-1,
\end{equation*}%
it can be seen that 
\begin{equation*}
h(\alpha -1)=\frac{1}{k}\left( \log 1+\dsum\limits_{i=1}^{k}\log \left( \max
\left\{ |\alpha _{i}-1|,1\right\} \right) \right) \leq \log 2.
\end{equation*}%
Consequently, we have 
\begin{eqnarray*}
h((2\alpha -2)g_{k}(\alpha )) &\leq &h(2)+h(\alpha -1)+h(g_{k}(\alpha )) \\
&<&\log 2+\log 2+5\log k<8\log k.
\end{eqnarray*}%
Hence, we can take $A_{3}:=8k\log k.$ Also, since $m\leq 1.73n,$ it follows
that $B:=1.73n.$ Thus, taking into account the inequality (\ref{3.3}) and
using Theorem \ref{T2}, we obtain{\small 
\begin{equation*}
\dfrac{1.82}{\alpha ^{n}}>\left\vert \Lambda _{1}\right\vert >\exp \left(
-C\cdot k^{2}(1+\log k)(1+\log \left( 1.73n\right) )\left( k\log y\right)
\left( \log 3\right) \left( 8k\log k\right) \right)
\end{equation*}%
}and so 
\begin{equation}
n\log \alpha -\log (1.82)<C\cdot k^{2}\cdot 3\log k\cdot 3\log n\cdot \left(
k\log y\right) \left( \log 3\right) \left( 8k\log k\right) ,  \label{12}
\end{equation}%
where $C=1.4\cdot 30^{6}\cdot 3^{4.5}$ and we have used the fact that $%
1+\log k<3\log k$ for $k\geq 2$ and $1+\log \left( 1.73n\right) <3\log n$
for $n\geq 3.$ From the inequality (\ref{12}), a quick computation with
Mathematica yields \textit{\ }%
\begin{equation*}
n<1.64\cdot 10^{13}\cdot k^{4}\cdot (\log k)^{2}\cdot \log y\cdot \log n.
\end{equation*}%
Thus, the proof is completed.
\end{proof}

\begin{theorem}
Let $2\leq y\leq 100$. Then all solutions $(n,m,y)$ of Diophantine equation 
\emph{(\ref{1.1})} are given by $(n,m,y)=(3,2,4),(3,4,2)$ with $k\geq 3.$
\end{theorem}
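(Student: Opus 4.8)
The plan is to combine the large upper bound for $n$ from Theorem~\ref{T4} with two successive reduction arguments (Lemma~\ref{L2}) to bring $n$ down to a computationally feasible range, and then finish by a direct search. First I would recall that from Theorem~\ref{T4} we have $n<1.64\cdot 10^{13}\cdot k^{4}(\log k)^{2}\log y\log n$, and since $2\leq y\leq 100$ we have $\log y\leq \log 100$, so $n$ is bounded by an explicit function of $k$ alone. However, $k$ itself is a variable, so I would first need to bound $k$. The key observation (already implicit in the proof of Theorem~\ref{T4}) is that if $n\geq k+1$ then the dominant term analysis applies, whereas if $1\leq n\leq k$ then $Q_n^{(k)}=2F_{2n}$ forces $n\leq 6$ by the Patel result cited. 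So in the nontrivial range $n\geq k+1$, the inequality $n<1.64\cdot 10^{13}k^{4}(\log k)^{2}\log(100)\log n$ together with $k\leq n-1$ gives an absolute upper bound for $k$ (and hence for $n$ for each fixed $k$), roughly of the form $k<n<$ some explicit constant — but this constant, being around $10^{13}$-ish, is far too large for a brute-force search, which is exactly why the Baker--Davenport reduction is needed.

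Next I would run the first reduction. Going back to the linear form $\Lambda_1=y^m\alpha^{-n}((2\alpha-2)g_k(\alpha))^{-1}-1$ with $|\Lambda_1|<1.82/\alpha^n$, I take logarithms: using Lemma~\ref{L3} with $a$ slightly less than $1$ (valid once $\alpha^n$ is large, i.e. $n$ beyond a small threshold), one gets $|m\log y - n\log\alpha - \log((2\alpha-2)g_k(\alpha))| < c\cdot \alpha^{-n}$ for an explicit small constant $c$. Dividing through by $\log\alpha$ puts this in the shape $|m\,\gamma - n + \mu| < A B^{-n}$ with $\gamma=\log y/\log\alpha$ irrational, $\mu=-\log((2\alpha-2)g_k(\alpha))/\log\alpha$, $A$ explicit and $B=\alpha>2$. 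Here the role of $u,v,w$ in Lemma~\ref{L2} is played by $m, n, n$, with $M$ being the upper bound for $m$ (which is at most $1.73\cdot(\text{bound on }n)$). For each of the finitely many pairs $(k,y)$ with $3\leq k\leq$ (the $k$-bound) and $2\leq y\leq 100$ — this is a large but genuinely finite loop — I compute a convergent $p/q$ of the continued fraction of $\gamma$ with $q>6M$, form $\epsilon=\|\mu q\|-M\|\gamma q\|$, check $\epsilon>0$, and thereby conclude $n<\log(Aq/\epsilon)/\log\alpha$, which should be a bound on the order of a few hundred. If some $\epsilon\leq 0$ occurs I would pass to the next convergent. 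I would then possibly repeat the reduction a second time with the improved bound on $M$ to sharpen $n$ further, though one pass may already suffice.

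Finally, with $n$ reduced to a small explicit range (say $n$ at most a couple of hundred) and $k<n$, $2\leq y\leq 100$, $2\leq m\leq 1.73n$, I would simply verify computationally — using the Binet-type expression (\ref{0.11}) or directly the recurrence — which triples $(n,m,y)$ actually satisfy $Q_n^{(k)}=y^m$, taking care to separately handle the already-settled small cases $n\leq k$ (where $Q_n^{(k)}=2F_{2n}$ and the only relevant perfect-power value is $2F_6=16$, giving $Q_3^{(k)}=16=4^2=2^4$ for $k\geq 3$, while for $k=2$, $Q_n=Q_n^{(2)}$ is never a perfect power by Theorem~\ref{ty}). The main obstacle I anticipate is not any single deep step but the bookkeeping of the reduction over the double loop in $(k,y)$: one must make sure that the bound on $k$ is extracted cleanly first (so that the loop is finite), that $M$ is chosen large enough to cover all surviving $n$ for that $(k,y)$, that a suitable convergent with $q>6M$ and $\epsilon>0$ is found in every case, and that the case $k=2$ and the cases $n\leq 6$ are not accidentally dropped. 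Once these are in place, the conclusion $(n,m,y)=(3,2,4),(3,4,2)$ for $k\geq 3$ drops out.
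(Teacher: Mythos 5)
Your outline correctly reproduces the paper's treatment of the ranges $1\le n\le k$ (via $Q_n^{(k)}=2F_{2n}$ and the Luca--Patel theorem), of $k=2$ (via Theorem \ref{ty}), and of the Baker--Davenport reduction of $n$ for bounded $k$ via Lemma \ref{L2}. But there is a genuine gap at the very first step of the case $n\ge k+1$: you claim that the inequality $n<1.64\cdot 10^{13}\,k^4(\log k)^2\log(100)\log n$ together with $k\le n-1$ ``gives an absolute upper bound for $k$.'' It does not. The right-hand side grows like $k^4(\log k)^2\log n$, so substituting $k<n$ merely yields $n<C\,n^4(\log n)^3$, which holds for every $n$; for instance $(k,n)=(k,k+1)$ satisfies the inequality for arbitrarily large $k$. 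Theorem \ref{T4} bounds $n$ only in terms of $k$, and nothing in your argument bounds $k$ itself. Consequently your ``finitely many pairs $(k,y)$'' loop is not finite, and the reduction scheme cannot even be started. (A secondary point: even if $k$ were bounded by something of size $10^{13}$, running a continued-fraction reduction separately for each $k$ --- the data $\gamma=\log y/\log\alpha(k)$ and $\mu$ depend on $k$ --- would be computationally hopeless; the paper only ever does this for $k\in[3,510]$.)

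Closing this gap is the main content of the paper's proof and requires an idea your proposal does not contain: for $k>510$ one exploits Lemma \ref{L1}(b), namely $\varphi^2(1-\varphi^{-k})<\alpha<\varphi^2$, together with $n<\varphi^{k/2}$ (which does follow from Theorem \ref{T4} once $k>510$), to replace $\alpha^n$ by $\varphi^{2n}$ and $g_k(\alpha)$ by $g_k(\varphi^2)=1/(\varphi+2)$ with controlled errors. This converts the problem into a second linear form $\Lambda_1=y^m\varphi^{-(2n+1)}\left(\frac{\varphi+2}{2}\right)-1$ over the fixed quadratic field $\mathbb{Q}(\sqrt{5})$ satisfying $|\Lambda_1|<1.251\,\varphi^{-k/2}$, so that Matveev's theorem now bounds $k$ (rather than $n$) by a multiple of $\log n$; combined with $\log n<38\log k$ this yields the absolute bound $k<4.84\cdot 10^{16}$, and two rounds of Lemma \ref{L2} applied to this $k$-independent form (whose continued-fraction data depend only on $y$) force $k<505$, contradicting $k>510$. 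Only then is one reduced to $k\in[3,510]$, where your proposed reduction in $n$ and the final finite search apply. Without this large-$k$ argument your proof cannot be completed; with it, the remainder of your plan coincides with the paper's.
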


\proof%
Assume that Diophantine equation (\ref{1.1})\emph{\ }is satisfied for $2\leq
y\leq 100.$ If $1\leq n\leq k,$ then we have $Q_{n}^{(k)}=2F_{2n}=y^{m}$ by
Lemma \ref{L4} (c). From here, by Theorem 2 given in \cite{patel}, $%
F_{2n}=2^{m-1}(y/2)^{m}$ implies that $(n,m,y)=(3,2,4),(3,4,2).$ Now we
assume that $n\geq k+1.$ If $k=2,$ then $n\geq 3$ and we have $Q_{n}=y^{m}.$
By Theorem \ref{ty}, the equation $Q_{n}=y^{m}$ has no solutions in positive
integers $n\geq 3$ and $m\geq 2.$ Therefore, assume that $k\geq 3.$ In this
case $n\geq 4.$ Also, since $y\leq 100$, by (\ref{3.5}), we get \textit{\ }%
\begin{equation*}
n<7.56\cdot 10^{13}\cdot k^{4}\cdot (\log k)^{2}\cdot \log n.
\end{equation*}%
Rearranging the last inequality as 
\begin{equation*}
\frac{n}{\log n}<7.56\cdot 10^{13}\cdot k^{4}\cdot (\log k)^{2}
\end{equation*}%
and using the fact that 
\begin{equation*}
\text{if }A\geq 3\text{ and }\frac{n}{\log n}<A,\text{ then }n<2A\log A,
\end{equation*}%
we obtain 
\begin{eqnarray}
n &<&15.12\cdot 10^{13}\cdot k^{4}\cdot (\log k)^{2}\cdot \log \left(
7.56\cdot 10^{13}\cdot k^{4}\cdot (\log k)^{2}\right)  \label{3.6} \\
&<&15.12\cdot 10^{13}\cdot k^{4}\cdot (\log k)^{2}\cdot (32+4\log k+2\log
(\log k))  \notag \\
&<&15.12\cdot 10^{13}\cdot k^{4}\cdot (\log k)^{2}\cdot 34\log k  \notag \\
&<&5.141\cdot 10^{15}\cdot k^{4}\cdot (\log k)^{3},  \notag
\end{eqnarray}%
where we have used the fact that $32+4\log k+2\log (\log k)<34\log k$ for
all $k\geq 3$. Let $k\in \lbrack 3,510].$ Then, we get $n<2.831\cdot 10^{28}$
by (\ref{3.6}). Now, let us try to reduce the upper bound on $n$ applying
Lemma \ref{L2}. Let%
\begin{equation*}
z_{1}:=m\log y-n\log \alpha +\log \left( ((2\alpha -2)g_{k}(\alpha
))^{-1}\right)
\end{equation*}%
\bigskip and $x=e^{z_{1}}-1$. Then, from (\ref{3.3}), it is seen that 
\begin{equation*}
\left\vert x\right\vert =\left\vert e^{z_{1}}-1\right\vert <\frac{1.82}{%
\alpha ^{n}}<0.12
\end{equation*}%
for $n\geq 4.$ Choosing $a:=$ $0.12,$ we get the inequality%
\begin{equation*}
|z_{1}|=\left\vert \log (x+1)\right\vert <\frac{-\log (1-0.12)}{(0.12)}\cdot 
\frac{1.82}{\alpha ^{n}}<\frac{1.94}{\alpha ^{n}}
\end{equation*}%
by Lemma \ref{L3}. Thus, it follows that 
\begin{equation*}
0<\left\vert m\log y-n\log \alpha +\log \left( ((2\alpha -2)g_{k}(\alpha
))^{-1}\right) \right\vert <\frac{1.94}{\alpha ^{n}}.
\end{equation*}%
Dividing this inequality by $\log \alpha ,$ we get 
\begin{equation}
0<|m\gamma -n+\mu |<A\cdot B^{-w},  \label{3.7}
\end{equation}%
where 
\begin{equation*}
\gamma :=\dfrac{\log y}{\log \alpha },~\mu :=\dfrac{\log \left( ((2\alpha
-2)g_{k}(\alpha ))^{-1}\right) }{\log \alpha },~A:=1.94,~B:=\alpha \text{,
and }w:=n.
\end{equation*}%
It can be easily seen that $\dfrac{\log y}{\log \alpha }$ is irrational. If
it were not, then we could write $\dfrac{\log y}{\log \alpha }=\dfrac{b}{a}$
for some positive integers $a$ and $b$. This implies that $y^{a}=\alpha
^{b}. $ Conjugating this equality by some automorphisim belonging to the
Galois group of the splitting field of $\Psi _{k}(x)$ over $%
\mathbb{Q}
$ and taking absolute values, we get $y^{a}=|\alpha _{i}|^{b}$ for any $i>1.$
This is impossible since $|\alpha _{i}|<1$ and $y\geq 2.$ If we take%
\begin{equation*}
M:=4.9\cdot 10^{28},
\end{equation*}%
which is an upper bound on $m$ since $m\leq 1.73n<4.9\cdot 10^{28}$, we
found that $q_{81},$ the denominator of the $81$ th convergent of $\gamma $
exceeds $6M.$ Furthermore, a quick computation with Mathematica gives us
that the value%
\begin{equation*}
\dfrac{\log \left( Aq_{81}/\epsilon \right) }{\log B}
\end{equation*}%
is less than $144.6$ for all $k\in \left[ 3,510\right] $. So, if the
inequality (\ref{3.7}) has a solution, then \textit{\ } 
\begin{equation*}
n<\dfrac{\log \left( Aq_{81}/\epsilon \right) }{\log B}<144.6,
\end{equation*}%
which shows that $n\leq 144.$ In this case, $m\leq 249$ by (\ref{3.1}). A
quick computation with Mathematica gives us that the equation $%
Q_{n}^{(k)}=y^{m}$ with $2\leq y\leq 100$ has no solutions for $n\in \left[
4,144\right] ,m\in \lbrack 2,249]$ and $k\in \left[ 3,510\right] .$ Thus,
this completes the analysis in the case $k\in \left[ 3,510\right] $. From
now on, we can assume that $k>510.$ Then we can see from (\ref{3.6}) that
the inequality%
\begin{equation}
n<5.141\cdot 10^{15}\cdot k^{4}\cdot (\log k)^{3}<\varphi ^{k/2-2}<\varphi
^{k/2}  \label{x}
\end{equation}%
holds for $k>510.$

Now, let $\lambda >0$ be such that $\alpha +\lambda =\varphi ^{2}.$ By Lemma %
\ref{L1} (b), we obtain 
\begin{equation*}
\lambda =\varphi ^{2}-\alpha <\varphi ^{2}-\varphi ^{2}(1-\varphi
^{-k})=\varphi ^{-k+2},
\end{equation*}%
that is, 
\begin{equation}
\lambda <\dfrac{1}{\varphi ^{k-2}}.  \label{3.8}
\end{equation}%
Also $2\alpha -2=2\varphi ^{2}-2\lambda -2=2\varphi -2\lambda ,$ and so%
\begin{equation*}
2\varphi >2\alpha -2>2\varphi -\dfrac{2}{\varphi ^{k-2}}.
\end{equation*}%
Moreover, 
\begin{eqnarray*}
\alpha ^{n} &=&\left( \varphi ^{2}-\lambda \right) ^{n}=\varphi ^{2n}(1-%
\frac{\lambda }{\varphi ^{2}})^{n} \\
&=&\varphi ^{2n}e^{n\log (1-\frac{\lambda }{\varphi ^{2}})}\geq \varphi
^{2n}e^{-n\lambda }\geq \varphi ^{2n}(1-n\lambda ) \\
&>&\varphi ^{2n}\left( 1-\dfrac{n}{\varphi ^{k-2}}\right) ,
\end{eqnarray*}%
where we have used the facts that $\log (1-x)\geq -\varphi ^{2}x$ for $%
0<x<0.906,$ and $e^{-x}>1-x$ for all $x\in 
\mathbb{R}
\backslash \{0\}.$ Thus,%
\begin{equation*}
\alpha ^{n}>\varphi ^{2n}-\dfrac{n\varphi ^{2n}}{\varphi ^{k-2}}>\varphi
^{2n}-\dfrac{\varphi ^{2n}}{\varphi ^{k/2}}
\end{equation*}%
by (\ref{x}). In this case, 
\begin{eqnarray}
\left( 2\alpha -2\right) \alpha ^{n} &>&\left( 2\varphi -\dfrac{2}{\varphi
^{k-2}}\right) \left( \varphi ^{2n}-\dfrac{\varphi ^{2n}}{\varphi ^{k/2}}%
\right)  \label{3.21} \\
&=&2\varphi ^{2n+1}-2\varphi ^{2n}\left( \dfrac{\varphi }{\varphi ^{k/2}}+%
\dfrac{1}{\varphi ^{k-2}}-\dfrac{1}{\varphi ^{3k/2-2}}\right)  \notag \\
&>&2\varphi ^{2n+1}-\dfrac{4\varphi ^{2n}}{\varphi ^{k/2}},  \notag
\end{eqnarray}%
where we have used the fact that 
\begin{equation*}
\dfrac{\varphi }{\varphi ^{k/2}}+\dfrac{1}{\varphi ^{k-2}}-\dfrac{1}{\varphi
^{3k/2-2}}<\dfrac{2}{\varphi ^{k/2}}
\end{equation*}%
for $k>510.$ Since $\alpha <\varphi ^{2},$ it follows that%
\begin{equation*}
\left( 2\alpha -2\right) \alpha ^{n}<2\varphi \cdot \varphi ^{2n}<2\varphi
^{2n+1}+\dfrac{4\varphi ^{2n}}{\varphi ^{k/2}}
\end{equation*}%
and so we have 
\begin{equation}
\left\vert \left( 2\alpha -2\right) \alpha ^{n}-2\varphi ^{2n+1}\right\vert <%
\dfrac{4\varphi ^{2n}}{\varphi ^{k/2}}.  \label{3.9}
\end{equation}%
Let us consider 
\begin{equation*}
g_{k}(x)=\dfrac{x-1}{(k+1)x^{2}-3kx+k-1},
\end{equation*}%
defined in (\ref{0.5}) as a function of a real variable$.$ By the Mean-Value
Theorem, we can say that there exist some $\theta \in (\alpha ,\varphi ^{2})$
such that 
\begin{equation}
g_{k}^{^{\prime }}(\theta )=\dfrac{g_{k}(\alpha )-g_{k}(\varphi ^{2})}{%
\alpha -\varphi ^{2}}.  \label{3.10}
\end{equation}%
Calculating $g_{k}^{^{\prime }}(\theta )$ and using Lemma \ref{L1}, the
inequalities (\ref{0.6}) and (\ref{0.12}), we get 
\begin{eqnarray*}
\left\vert g_{k}^{^{\prime }}(\theta )\right\vert &=&\left\vert \frac{%
(k+1)\theta ^{2}-2k\theta -2\theta +2k+1}{\left( (k+1)\theta ^{2}-3k\theta
+k-1\right) ^{2}}\right\vert \\
&<&1+\frac{(k-2)\theta +k+2}{(k+1)\theta ^{2}-3k\theta +k-1} \\
&=&1+g_{k}(\theta )\frac{(k-2)\theta +k+2}{\theta -1} \\
&=&1+g_{k}(\alpha )\left( k-2+\frac{2k}{\theta -1}\right) \\
&<&1+\left( \frac{k-2}{2}+\frac{k}{\theta -1}\right) \\
&<&\frac{3k}{2}.
\end{eqnarray*}%
Hence, from (\ref{3.10}), it follows that%
\begin{equation*}
\left\vert g_{k}(\alpha )-g_{k}(\varphi ^{2})\right\vert =\left\vert \alpha
-\varphi ^{2}\right\vert \left\vert g_{k}^{^{\prime }}(\theta )\right\vert
=\lambda \left\vert g_{k}^{^{\prime }}(\theta )\right\vert ,
\end{equation*}%
which implies that 
\begin{equation}
\left\vert g_{k}(\alpha )-g_{k}(\varphi ^{2})\right\vert <\frac{3k/2}{%
\varphi ^{k-2}}<\frac{4k}{\varphi ^{k}}  \label{3.11}
\end{equation}%
by (\ref{3.8}).

Now let us record what we made.

\begin{lemma}
\label{L8}Let $k>510$ and let $\alpha $ be dominant root of $\Psi _{k}(x).$
Let consider $g_{k}(x)$ defined in \emph{(\ref{0.5})} as a function of a
real variable. Then 
\begin{equation*}
g_{k}(\alpha )=g_{k}(\varphi ^{2})+\eta ,
\end{equation*}%
where $\left\vert \eta \right\vert <\frac{4k}{\varphi ^{k}}.$
\end{lemma}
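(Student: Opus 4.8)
The plan is to recognize that Lemma~\ref{L8} merely packages the bound already established in the computation leading to (\ref{3.10})--(\ref{3.11}). Setting $\eta:=g_{k}(\alpha)-g_{k}(\varphi^{2})$, the displayed equality $g_{k}(\alpha)=g_{k}(\varphi^{2})+\eta$ is a tautology, so the entire content of the lemma is the estimate $|\eta|<4k/\varphi^{k}$, and that is what I would prove.

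To get that estimate, first I would apply the Mean-Value Theorem to $g_{k}$ on the interval with endpoints $\alpha$ and $\varphi^{2}$. This is legitimate: by Lemma~\ref{L1} and the remark following it we have $\alpha\in(c_{k},\varphi^{2})$, and on $(\alpha,\varphi^{2})$ the denominator $(k+1)x^{2}-3kx+k-1$ exceeds $2$ by (\ref{0.12}), so $g_{k}$ is $C^{1}$ there with no pole. Hence there is $\theta\in(\alpha,\varphi^{2})$ with $g_{k}'(\theta)=\eta/(\alpha-\varphi^{2})$, i.e.\ $|\eta|=\lambda\,|g_{k}'(\theta)|$ where $\lambda:=\varphi^{2}-\alpha>0$.

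Next I would bound $|g_{k}'(\theta)|$. Differentiating $g_{k}(x)=(x-1)/\big((k+1)x^{2}-3kx+k-1\big)$ and simplifying, the numerator of $g_{k}'$ is $\pm\big((k+1)\theta^{2}-2k\theta-2\theta+2k+1\big)$, which equals $\big((k+1)\theta^{2}-3k\theta+k-1\big)+(k-2)\theta+k+2$. Dividing through by the square of the denominator and using (\ref{0.12}) (the denominator is $>2$, hence $>1$), together with the identities $\tfrac{1}{(k+1)\theta^{2}-3k\theta+k-1}=\tfrac{g_{k}(\theta)}{\theta-1}$ and $\tfrac{(k-2)\theta+k+2}{\theta-1}=(k-2)+\tfrac{2k}{\theta-1}$, then invoking the monotonicity of $g_{k}$ on $(\alpha,\varphi^{2})$ to replace $g_{k}(\theta)$ by $g_{k}(\alpha)<\tfrac12$ (Lemma~\ref{L1}(d)) and the bound $\theta-1>\alpha-1>1$ (since $\alpha>2$ by (\ref{0.7})), I obtain $|g_{k}'(\theta)|<1+\tfrac{k-2}{2}+k=\tfrac{3k}{2}$.

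Finally I would combine these with (\ref{3.8}): $|\eta|=\lambda\,|g_{k}'(\theta)|<\lambda\cdot\tfrac{3k}{2}<\dfrac{3k/2}{\varphi^{k-2}}$, and since $\tfrac32\varphi^{2}=\tfrac{9+3\sqrt5}{4}<4$ this is $<4k/\varphi^{k}$, which is the claim. I do not expect a genuine obstacle: the only points that need care are verifying that $g_{k}$ has no pole on the relevant interval (so the Mean-Value Theorem is applicable) and the bookkeeping in simplifying $g_{k}'$ and squeezing its bound down to $3k/2$; everything else is elementary arithmetic with $\varphi$.
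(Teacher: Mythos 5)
Your proposal is correct and follows essentially the same route as the paper: the Mean--Value Theorem on $(\alpha,\varphi^{2})$, the bound $|g_{k}'(\theta)|<3k/2$ obtained by the same decomposition of the numerator of $g_{k}'$ (together with $g_{k}(\theta)<g_{k}(\alpha)<1/2$ and the denominator bound (\ref{0.12})), and then $\lambda<\varphi^{-(k-2)}$ from (\ref{3.8}) with $\tfrac{3}{2}\varphi^{2}<4$. Your explicit justification of the monotonicity step (where the paper writes an equality that should be an inequality) and of the absence of a pole on the interval is a welcome tightening, but not a different argument.
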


Taking into account the inequality (\ref{3.9}) and Lemma \ref{L8}, we can
write 
\begin{equation}
\left( 2\alpha -2\right) \alpha ^{n}=2\varphi ^{2n+1}+\delta \text{ and }%
g_{k}(\alpha )=g_{k}(\varphi ^{2})+\eta  \label{3.12}
\end{equation}%
such that 
\begin{equation}
\left\vert \delta \right\vert <\dfrac{4\varphi ^{2n}}{\varphi ^{k/2}}\text{
and }\left\vert \eta \right\vert <\frac{4k}{\varphi ^{k}}.  \label{3.13}
\end{equation}%
Thus, since $g_{k}(\varphi ^{2})=\dfrac{1}{\varphi +2}$ by Lemma \ref{L1}
(c), it is seen that 
\begin{equation}
\left( 2\alpha -2\right) g_{k}(\alpha )\alpha ^{n}=\frac{2\varphi ^{2n+1}}{%
\varphi +2}+\frac{\delta }{\varphi +2}+2\varphi ^{2n+1}\eta +\eta \delta .
\label{3.14}
\end{equation}%
So, using (\ref{3.2}), (\ref{3.13}), and (\ref{3.14}), we obtain{\small 
\begin{eqnarray}
\left\vert y^{m}-\frac{2\varphi ^{2n+1}}{\varphi +2}\right\vert
&=&\left\vert \left( y^{m}-\left( 2\alpha -2\right) g_{k}(\alpha )\alpha
^{n}\right) +\frac{\delta }{\varphi +2}+2\varphi ^{2n+1}\eta +\eta \delta
\right\vert  \label{3.15} \\
&\leq &\left\vert y^{m}-\left( 2\alpha -2\right) g_{k}(\alpha )\alpha
^{n}\right\vert +\frac{\left\vert \delta \right\vert }{\varphi +2}+2\varphi
^{2n+1}\left\vert \eta \right\vert +\left\vert \eta \right\vert \left\vert
\delta \right\vert  \notag \\
&<&2+\dfrac{4\varphi ^{2n}}{\varphi ^{k/2}\left( \varphi +2\right) }+\frac{%
8k\varphi ^{2n+1}}{\varphi ^{k}}+\dfrac{16k\varphi ^{2n}}{\varphi ^{3k/2}}. 
\notag
\end{eqnarray}%
}Dividing both sides of the above inequality by $\dfrac{2\varphi ^{2n+1}}{%
\varphi +2},$ we get{\small 
\begin{eqnarray}
\left\vert y^{m}\varphi ^{-(2n+1)}\left( \frac{\varphi +2}{2}\right)
-1\right\vert &<&\frac{\varphi +2}{\varphi ^{2n+1}}+\dfrac{2/\varphi }{%
\varphi ^{k/2}}+\frac{4k\left( \varphi +2\right) }{\varphi ^{k}}+\dfrac{%
8k\left( \varphi +2\right) }{\varphi \cdot \varphi ^{3k/2}}  \label{3.16} \\
&<&\dfrac{0.001}{\varphi ^{k/2}}+\dfrac{1.24}{\varphi ^{k/2}}+\dfrac{0.005}{%
\varphi ^{k/2}}+\dfrac{0.005}{\varphi ^{k/2}}=\dfrac{1.251}{\varphi ^{k/2}},
\notag
\end{eqnarray}%
} where we have used the facts that 
\begin{equation*}
\frac{\varphi +2}{\varphi ^{2n+1}}<\dfrac{0.001}{\varphi ^{k/2}},~\frac{%
4k\left( \varphi +2\right) }{\varphi ^{k}}<\dfrac{0.005}{\varphi ^{k/2}}
\end{equation*}%
and%
\begin{equation*}
\dfrac{\left( 8k/\varphi \right) \left( \varphi +2\right) }{\varphi ^{3k/2}}<%
\dfrac{0.005}{\varphi ^{k/2}}
\end{equation*}%
for $k>510$. In order to use Theorem \ref{T2}, we take 
\begin{equation*}
\left( \gamma _{1},b_{1}\right) :=\left( y,m\right) ,~\left( \gamma
_{2},b_{2}\right) :=\left( \varphi ,-2n\right) ,~\left( \gamma
_{3},b_{3}\right) :=\left( \dfrac{\varphi +2}{2\varphi },1\right) .
\end{equation*}%
The number field containing $\gamma _{1},\gamma _{2}$ and $\gamma _{3}$ are $%
\mathbb{K}=\mathbb{Q}(\sqrt{5}),$ which has degree $D=2.$ We show that the
number 
\begin{equation*}
\Lambda _{1}:=y^{m}\varphi ^{-(2n+1)}\left( \frac{\varphi +2}{2}\right) -1
\end{equation*}%
is nonzero. Contrast to this, assume that $\Lambda _{1}=0.$ Then $%
y^{m}\left( \frac{\varphi +2}{2}\right) =\varphi ^{2n+1}$ and conjugating
this relation in $\mathbb{Q}(\sqrt{5}),$ we get $y^{m}\left( \frac{\beta +2}{%
2}\right) =\beta ^{2n+1},$ where $\beta =\frac{1-\sqrt{5}}{2}=\overline{%
\varphi }.$ From this, it is seen that 
\begin{equation*}
\dfrac{\varphi ^{2n+1}}{\varphi +2}=\dfrac{\beta ^{2n+1}}{\beta +2}<0,
\end{equation*}%
which is impossible since $\varphi >0.$ Therefore $\Lambda _{1}\neq 0.$
Moreover, since 
\begin{equation*}
h(\gamma _{1})=h(y)=\log y,h(\gamma _{2})=h(\varphi )\leq \frac{\log \varphi 
}{2}
\end{equation*}%
and 
\begin{equation*}
h(\gamma _{3})=h\left( \dfrac{\varphi +2}{2\varphi }\right) =h\left( \frac{%
\sqrt{5}}{2}\right) \leq \frac{\log 5}{2}
\end{equation*}%
by (\ref{2.3}), we can take $A_{1}:=2\log y,~A_{2}:=\log \varphi $ and $%
A_{3}:=\log 5.$ Also, since $m<1.73n,$ we can take $B:=2n.$ Thus, taking
into account the inequality (\ref{3.16}) and using Theorem \ref{T2}, we
obtain%
\begin{equation*}
\left( 1.251\right) \cdot \varphi ^{-k/2}>\left\vert \Lambda _{1}\right\vert
>\exp \left( C\cdot (1+\log 2n)\left( 2\log y\right) \left( \log \varphi
\right) \left( \log 5\right) \right) ,
\end{equation*}%
where $C=-1.4\cdot 30^{6}\cdot 3^{4.5}\cdot 2^{2}\cdot (1+\log 2)$. This
implies that 
\begin{equation*}
\frac{k}{2}\log \varphi -\log (1.251)<6.92\cdot 10^{12}\cdot (1+\log 2n)
\end{equation*}%
or 
\begin{equation}
k<3.31\cdot 10^{13}\cdot \log n,  \label{3.17}
\end{equation}%
where we have used the fact that $(1+\log 2n)<(2.3)\log n$ for $n\geq 4.$ On
the other hand, from (\ref{3.6}), we get 
\begin{eqnarray*}
\log n &<&\log \left( 5.141\cdot 10^{15}\cdot k^{4}\cdot (\log k)^{3}\right)
\\
&<&36.2+4\log k+3\log (\log k) \\
&<&38\log k
\end{eqnarray*}%
for $k\geq 3.$ So, from (\ref{3.17}), we obtain 
\begin{equation*}
k<3.31\cdot 10^{13}\cdot 38\log k,
\end{equation*}%
which implies that 
\begin{equation}
k<4.84\cdot 10^{16}.  \label{3.18}
\end{equation}%
To reduce this bound on $k$, we use Lemma \ref{L2}. Substituting this bound
of $k$ into (\ref{3.6}), we get $n<1.6\cdot 10^{87},$ which shows that $%
m<2.77\cdot 10^{87}.$

Now, let 
\begin{equation*}
z_{2}:=m\log y-(2n+1)\log \varphi +\log \left( \frac{\varphi +2}{2}\right)
\end{equation*}%
and $x:=1-e^{z_{2}}$. Then 
\begin{equation*}
|x|=\left \vert 1-e^{z_{2}}\right \vert <\dfrac{1.251}{\varphi ^{k/2}}
\end{equation*}%
by (\ref{3.16}) and so $|x|<0.1$ for $k>510$. Choosing $a:=$ $0.1,$ we get
the inequality

\begin{equation*}
|z_{2}|=\left \vert \log (x+1)\right \vert <\frac{\log (100/90)}{0.1}\cdot 
\dfrac{1.251}{\varphi ^{k/2}}<\frac{1.32}{\varphi ^{k/2}}
\end{equation*}%
by Lemma \ref{L3}. That is, 
\begin{equation*}
0<\left \vert m\log y-(2n+1)\log \varphi +\log \left( \frac{\varphi +2}{2}%
\right) \right \vert <\frac{1.32}{\varphi ^{k/2}}.
\end{equation*}%
Dividing both sides of the above inequality by $\log \varphi ,$ we obtain 
\begin{equation}
0<|m\gamma -(2n+1)+\mu |<A\cdot B^{-w},  \label{3.20}
\end{equation}%
where 
\begin{equation*}
\gamma :=\dfrac{\log y}{\log \varphi }\notin 
\mathbb{Q}
,~\mu :=\dfrac{\log \left( \frac{\varphi +2}{2}\right) }{\log \varphi }%
,~A:=2.75,~B:=\varphi \text{, and }w:=k/2.
\end{equation*}%
If we take $M:=2.77\cdot 10^{87}$, which is an upper bound on $m$, we found
that $q_{207},$ the denominator of the $207$ th convergent of $\gamma $
exceeds $6M.$ Furthermore, a quick computation with Mathematica gives us
that the value 
\begin{equation*}
\dfrac{\log \left( Aq_{207}/\epsilon \right) }{\log B}
\end{equation*}%
is less than $585.91.$ So, if the inequality (\ref{3.20}) has a solution,
then \textit{\ } 
\begin{equation*}
\frac{k}{2}<\dfrac{\log \left( Aq_{207}/\epsilon \right) }{\log B}\leq
585.91,
\end{equation*}%
which implies that $k\leq 1171.$ Hence, from (\ref{3.6}), we get $%
n<3.41\cdot 10^{30},$ which shows that $m<5.9\cdot 10^{30}.$ If we apply the
inequality (\ref{3.20}) to Lemma \ref{L2} again with $M:=5.9\cdot 10^{30},$
we found that $q_{69},$ the denominator of the $69$ th convergent of $\gamma 
$ exceeds $6M.$ After doing this, then a quick computation with Mathematica
show that in case the inequality (\ref{3.20}) has a solution, we get $k<505.$
This contradicts the fact that $k>510.$ This completes the proof.%
\endproof%

\end{document}